\newcommand{\mb}[1]{\mathbf{#1}}
\newcommand{\bb}[1]{\mathbb{#1}}
\newcommand{\Rp}[1]{\mathbb{R}^{#1}}
\newcommand{\pard}[2]{\frac{\partial #1}{\partial #2}}
\newcommand{\mbf}[1]{\mathbf{#1}}
\newcommand{\mbb}[1]{\mathbb{#1}}
\newcommand{\ho}{\left(\frac{d}{dt} -\Delta \right)}
\newcommand{\ddt}[1]{\frac{ d #1}{dt}}
\newcommand{\ip}[2]{\left \langle #1 , #2 \right\rangle}
\newcommand{\n}{\nabla}
\begin{document}
\theoremstyle{plain}
\newtheorem{theorem}{Theorem}[section]
\newtheorem{lemma}[theorem]{Lemma}
\newtheorem{claim}[theorem]{Claim}
\newtheorem{prop}[theorem]{Proposition}
\newtheorem{cor}[theorem]{Corollary}

\theoremstyle{definition}
\newtheorem{defses}[theorem]{Definition}
\newtheorem{assumption}[theorem]{Assumption}

\theoremstyle{remark}
\newtheorem{remark}[theorem]{Remark}
\hyphenation{Min-kow-ski}

\begin{center}
{\LARGE \bfseries{The perpendicular Neumann problem for mean curvature flow with a timelike cone boundary condition}}
\\[20pt]
by \\
\textsc{Ben Lambert} \\
\emph{Durham University}\\
\texttt{b.s.lambert@durham.ac.uk}
\end{center}

\begin{abstract}
This paper demonstrates existence for all time of mean curvature flow in Minkowski space with a perpendicular Neumann boundary condition, where the boundary manifold is a convex cone and the flowing manifold is initially spacelike. Using a blowdown argument, we show that under renormalisation this flow converges towards a homothetically expanding hyperbolic solution.
\end{abstract} 
\begin{center}
\emph{Mathematics Subject Classification: 53C44  53C17  35K59}
\end{center}
\section{Introduction}

In this paper we work with Mean Curvature Flow (MCF) of hypersurfaces in Minkowski space with a perpendicular Neumann boundary condition. We show that if the boundary manifold is a convex cone made up of timelike rays then \emph{any} initially spacelike hypersurface satisfying the boundary condition will exist for all time. If we renormalize to keep volume of the flowing manifold constant then we see that this hypersurface converges to a homothetic solution to MCF, specifically that given by flowing a hyperbolic hyperplane (see Definition \ref{homothetic}). Since these hyperbolic hyperplanes are set of points of constant negative ``radius'' in Minkowski space, this may be seen as a Minkowski--Neumann analogue of \cite{HuiskenConvex}.

MCF with a perpendicular Neumann condition in the Euclidean setting was considered as a graph over $C^{2+\alpha}$ domains by Huisken in \cite{Huiskengraph}, where a  longtime existence and convergence result was obtained. Altschuler and Wu \cite{AltschulerWu} also considered graphs with a varying angle Neumann condition although only with flowing surfaces of dimension 2 and over convex domains, yielding convergence to translating solutions. In \cite{Stahlsecond} and \cite{Stahlfirst} Stahl dealt with perpendicular boundary conditions but this time not necessarily as a graph. He showed that when the boundary condition was an umbilic manifold and the initial surface was convex, the flowing manifolds converged to round points.  Buckland \cite{Buckland} found boundary monotonicity formulae for MCF and classified Type I boundary singularities for $H>0$ with a perpendicular Neumann boundary condition. 

MCF (and related flows) have been used in Minkowski space by Ecker and Huisken in \cite{EckerHuiskenCMCMink} to construct entire surfaces of constant mean curvature. In \cite{EckerMinkowskiDBC} Ecker dealt with both the Dirichlet boundary condition and the entire case obtaining that without any growth conditions on the initial data, the entire flow exists for all time. He also showed that for any graphical spacelike initial data MCF with a Dirichlet boundary condition exists for all time and converges to a maximal surface. In \cite{EckerNull} he generalized this to the degenerate Dirichlet boundary case in more general semi-Riemannian manifolds. For a recent application of spacelike MCF see the work of Guilfoyle and Klingenberg \cite{GuilfoyleKlingenbergCaratheodory}. 

This paper is part of the author's PhD thesis. The author would like to thank his supervisor, W. Klingenberg, for his support, enthusiasm and excellent advice. 

\subsection{Definitions and notation}
We will need several definitions: Define $\bb{R}^{n+1}_1$ (where $n\geq2$) to be Minkowski space that is $\bb{R}^{n+1}$ equipped with the indefinite metric $\ip{-}{-}$ where \[\ip{\mb{x}}{\mb{y}}=x_1y_1+ \ldots +x_ny_n -x_{n+1}y_{n+1} \]
Let $\Sigma$ be a smooth embedded manifold in $\bb{R}^{n+1}_1$ with an indefinite metric, which is to say it is possible to find locally $n-1$ orthogonal vector fields $X_i$ and $Y$ such that $\ip{X_i}{X_i}=1$ and $\ip{Y}{Y}=-1$. This will be the boundary manifold, and let $\mu$ be its outward pointing unit normal. Our intention is to flow $M^n$, a smooth $n$--dimensional topological manifold with boundary $\partial M$, from an initial spacelike embedding $\mb{F}_0:M^n \rightarrow \mbb R^{n+1}_1$. We specify additionally that $\mbf{F}_0( \partial M) \subset \Sigma$ with the extra compatibility condition $\ip{\nu}{\mu}=0$, where $\nu$ is the normal to $M_0=\mbf{F}_0(M^n)$.
\begin{defses}
Let $\mathbf{F}: M^n \times [0,T] \rightarrow \bb{R}^{n+1}_1$ be such that
\begin{equation}
\label{pmcf}
\begin{cases}
\frac{d \mathbf{F}}{dt} = \mathbf{H}= H \nu & \forall (x,t) \in M^n \times [0,T]\\
\mathbf{F}(\cdot,0)= M_0&\\
\mathbf{F}(x,t) \subset \Sigma & \forall (x,t) \in \partial M^n \times [0,T]\\
\ip{\nu}{\mu}(x,t)=0 & \forall (x,t) \in \partial M^n \times [0,T]\\
\end{cases}
\end{equation}
then $\mathbf{F}$ moves by \emph{Mean Curvature Flow with a Neumann free boundary condition $\Sigma$} (here $\nu(x,t)$ is the normal to $\mbf{F}$ at time $t$.)
\end{defses} 

We will need various geometric quantities on various manifolds. A bar will imply quantities on $\mbb{R}^{n+1}_1$, for example $\overline \Delta, \overline \nabla, \ldots$ and so on; no extra markings $\Delta, \nabla, \ldots $ will be geometric quantities on $M_t$ our flowing surface at time $t$ and for any other manifold $Z$ $ \Delta^Z,  \nabla^Z, \ldots \text{etc.}$ will be the Laplacian, covariant derivatives, $\ldots$  on $Z$.

In this paper we will choose $\Sigma$ to be a timelike cone -- a cone in Minkowski space such that at any but the singular point the tangent space has a strictly timelike vector (see below for full details). 

In Minkowski space we have the equivalent of the homothetically shrinking sphere -- the homothetically expanding hyperbolic plane.
\begin{defses}
\label{homothetic}
We define the \emph{expanding hyperbolic hyperplane} $\mb{G}_k$ to be the solution to (\ref{pmcf}), starting with the section of hyperbolic plane of ``radius'' $k$ inside the cone $\Sigma$. That is at time $t=0$, $\ip{\mb G_k}{\mb G_k}=-k^2$ with $(\mb{G}_k)_{n+1}>0$. It is easy to show that the boundary conditions are satisfied and that
\[
 -\ip{\mbf G_k}{\mbf G_k} =k^2 + 2nt
\]
\end{defses}

In this paper we obtain firstly the following longtime existence result:
\begin{theorem}
 Let $\Sigma$ be a convex cone. Given that $M_0$ is initially spacelike then a solution to equation (\ref{pmcf}) exists for all time. Furthermore this solution stays between two homothetic solutions $\mb G_{C_0}$ and $\mb{G}_{C_1}$ where $C_0$ an $C_1$ are the minimum and maximum values of $\sqrt{-<\mb{F}, \mb{F}>}$ at time $t=0$.
\end{theorem}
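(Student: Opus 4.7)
The plan is to combine a comparison-principle argument for the scalar quantity $\phi := -\ip{\mb{F}}{\mb{F}} - 2nt$ with standard a priori $C^2$ estimates in order to rule out finite-time singularities and then invoke a continuation principle.

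First I would prove the two-sided barrier. Since $M_t$ is spacelike with induced Riemannian metric and $\Delta_{M_t} \mb{F} = \mb{H}$, one obtains $|\nabla \mb{F}|^2 = n$, and a direct computation gives $\ho \phi = 0$. At the boundary, because $\Sigma$ is a cone the position vector $\mb{F}$ is tangent to $\Sigma$, so $\ip{\mb{F}}{\mu} = 0$; combined with $\ip{\nu}{\mu} = 0$, this forces $\mu$ to be tangent to $M_t$ along $\partial M_t$, hence $\mu$ is the outward conormal of $M_t$ and $\nabla_\mu \phi = -2\ip{\mb{F}}{\mu} = 0$. The parabolic maximum principle with homogeneous Neumann data then yields $C_0^2 \le \phi \le C_1^2$ throughout $M^n \times [0,T]$, which is exactly the barrier assertion and, in particular, keeps $M_t$ uniformly timelike and bounded away from the apex of $\Sigma$.

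Second, on a putative maximal existence interval $[0,T)$, I would derive uniform $C^2$ estimates. The natural ``gradient function'' here is $v := -\ip{\nu}{\mb{F}}/\sqrt{-\ip{\mb{F}}{\mb{F}}}$, which is positive and bounded below by $1$, and whose finiteness quantifies that $M_t$ remains spacelike. Computing $\ho v$ with the denominator controlled by the barrier, and using convexity of $\Sigma$ together with the cone condition to force $\nabla_\mu v \le 0$, should yield a uniform bound $v \le K$ via the maximum principle. A Stahl-type computation for $|A|^2$ (possibly multiplied by an auxiliary function of $v$ and $\phi$ to cancel bad reaction terms), again using convexity of the cone to sign the boundary term, would then give uniform curvature bounds. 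Once $v$ and $|A|^2$ are bounded, oblique-boundary parabolic Schauder theory produces uniform $C^{k,\alpha}$ estimates, and the standard continuation argument extends the flow past $T$, contradicting maximality and giving $T = \infty$.

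I expect the main obstacle to be the boundary term in the evolution of $|A|^2$: in the Euclidean umbilic setting treated by Stahl this term vanishes identically, whereas here $\Sigma$ is only convex, so the term is nontrivial and must be absorbed using convexity together with the barrier-induced lower bound $-\ip{\mb{F}}{\mb{F}} \ge C_0^2 + 2nt$. A related subtlety is that Minkowski geometry does not supply a canonical timelike direction compatible with an arbitrary timelike cone, so the Euclidean-style gradient function built from a fixed unit time vector must be replaced by one built from the position vector; the resulting evolution equation for $v$ picks up additional zero-order terms that must again be controlled using the barrier bounds established in the first step.
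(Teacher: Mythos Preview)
Your barrier step for $\phi = -\ip{\mb{F}}{\mb{F}} - 2nt$ is exactly what the paper does (Lemmas \ref{evolF2} and \ref{bdryF2}), and the Neumann condition $\nabla_\mu \phi = 0$ holds for the reason you give.

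The substantive divergence from the paper is in how you propose to close the argument. The paper never obtains a global bound on $|A|^2$; the only curvature estimates it proves are \emph{interior} ones (Lemmas \ref{A2interior} and \ref{nablaA2interior}), obtained with a compactly supported cutoff so that nothing needs to be checked at $\partial M$. Instead of bounding $|A|^2$, the paper parametrises $M_t$ as a graph $u$ over a fixed domain $D \subset B^n_1(\mb 0)$, so that the flow becomes the scalar quasilinear equation (\ref{gmac}) with an oblique boundary condition. Long-time existence then reduces to uniform parabolicity of (\ref{gmac}), which requires only two-sided bounds on $u = F$ (your barrier) and a positive lower bound on the gradient quantity $v$; once these are in hand the paper simply invokes standard quasilinear theory (Lieberman) and is done.

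Your hope that a Stahl-type computation will control $|A|^2$ at the boundary is therefore the risky step. Differentiating the boundary identity $A(\mu, W) = -A^\Sigma(\nu, W)$ once more brings in derivatives of $A^\Sigma$ and mixed $A \cdot A^\Sigma$ terms; in Stahl's setting umbilicity of $\Sigma$ kills these, but a convex cone is not umbilic (it has a zero principal curvature along the ray direction), and convexity alone does not obviously sign $\nabla_\mu |A|^2$. You flag this yourself as the main obstacle, and the paper's decision to bypass the issue entirely via the graphical PDE suggests the author did not see a direct way through it either.

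For the gradient step, the paper's route also differs from yours in detail: rather than working directly with $S/F$, it first bounds $H^2$ (convexity gives $\nabla_\mu H^2 \le 0$ via Lemma \ref{bdryH}) and then applies the maximum principle to $S - \tfrac{\sqrt{C_5}}{n} F^2$, whose heat operator is nonpositive once $H$ is bounded. Your direct approach via $S/F$ is in fact carried out in the paper, but only later (Section 7, for $J = S^2/F^2 - 1$) as an \emph{improvement} of the already-established gradient estimate, not as part of the existence proof.
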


We are able to get further convergence results. We define $\widehat{M}$ to be the blowdown of $M$, that is $M$ renormalised by dialations so that $\widehat{M}$ has constant unit area. By defining convergence ``at infinity'' to be the convegence of $\widehat{M}$, we get the following:

\begin{theorem}
 Any initially spacelike solution of equation (\ref{pmcf}) with a convex cone boundary condition under renormalisation will converge to a homothetic solution in the $C^1$ norm. Further, there exists an increasing sequence of $t_i$ such that $\widehat{M}_{t_i}$ converge to the solution on the interior in the $C^\infty$ topology. 
\end{theorem}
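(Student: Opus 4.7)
The plan is to rescale so that the homothetic solutions $\mb{G}_k$ become stationary and then identify the limit using the radius function $-\ip{\mbf{F}}{\mbf{F}}$, which turns out to be a caloric Neumann function on the flow.

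\textbf{Step 1 (rescaling).} Introduce
\[
\widetilde{\mbf{F}}(x,\tau) = \frac{\mbf{F}(x,t)}{\sqrt{1+2nt}}, \qquad \tau = \tfrac{1}{2n}\log(1+2nt).
\]
Since $\Sigma$ is a cone through the origin, the rescaling preserves $\Sigma$ and, as $\nu$ and $\mu$ are unchanged in direction, preserves $\ip{\nu}{\mu}=0$. The barriers from the previous theorem imply $\widetilde{\mbf{F}}(\cdot,\tau)$ is, for all $\tau\geq 0$, contained in the fixed compact region between the normalised caps associated with $C_0$ and $C_1$, and every $\mb{G}_k$ converges under this rescaling to the same unit hyperbolic cap $\widehat{\mb{G}}$.

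\textbf{Step 2 (caloric radius).} Let $u = -\ip{\mbf{F}}{\mbf{F}}$. A direct computation using $\partial_t \mbf{F}=H\nu$ and the Gauss formula gives $\ho u = 2n$, so $w := u-2nt$ satisfies $\ho w = 0$ on $M_t$. Because $\Sigma$ is a cone the position $\mbf{F}$ is tangent to $\Sigma$ at the boundary; combined with $\ip{\nu}{\mu}=0$ this forces $\mu$ to be tangent to $M_t$ (so $\mu$ is the outward conormal of $\partial M_t$ in $M_t$), and hence $\ip{\n u}{\mu} = -2\ip{\mbf{F}}{\mu} = 0$. Thus $w$ is a Neumann caloric function with $w\in[C_0^2,C_1^2]$, and the parabolic maximum principle gives that $\max_{M_t} w$ is non-increasing, $\min_{M_t} w$ is non-decreasing, and so $\mathrm{osc}_{M_t}(w) \searrow L \geq 0$.

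\textbf{Step 3 (blowdown and identification).} To show $L=0$, I would set up an expander-type monotonicity formula for MCF in $\bb{R}^{n+1}_1$, analogous to Huisken's monotonicity but adapted to expanders and to the indefinite metric. The cone hypothesis together with $\ip{\nu}{\mu}=0$ should again eliminate the boundary term, producing an $L^2$-in-$\tau$ bound on the self-expander defect $H\nu + \mbf{F}^{\perp}/(1+2nt)$. Along some sequence $\tau_i\to\infty$ this defect tends to zero in $L^2$; combined with uniform $C^1$ control from Step~1 and interior second fundamental form estimates (the perpendicular-Neumann analogue of the usual Ecker--Huisken interior regularity already invoked to prove the longtime theorem), standard parabolic theory gives subsequential $C^\infty_{\mathrm{loc}}$ convergence of $\widetilde{M}_{\tau_i}$ on the interior to a smooth spacelike self-expander $M_\infty\subset\Sigma$ with the perpendicular boundary condition. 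On any such limit, the rescaled $w$ is time-independent, caloric and Neumann, and the homothetic symmetry of $M_\infty$ forces $w$ to be constant there; hence $\mathrm{osc}(w)=0$ on $M_\infty$, so $L=0$ and $w\to k^2$ uniformly for a unique $k\in[C_0,C_1]$.

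\textbf{Step 4 (upgrading).} With $w\to k^2$ uniformly and uniform $C^1$ bounds on $\widetilde{\mbf{F}}$, the whole flow (not just subsequences) converges to the rescaled cap $\widehat{\mb{G}}_k$ in $C^1$, giving the first claim; the subsequential $C^\infty_{\mathrm{loc}}$ interior convergence along $t_i$ is exactly what Step~3 produced. The main obstacle is Step~3: producing a genuinely usable Minkowski/cone expander monotonicity in which the boundary term cancels, and propagating interior second fundamental form estimates long enough along the rescaled flow to extract a smooth subsequential limit -- all other steps are maximum principle manipulations or standard parabolic compactness arguments once that estimate is in place.
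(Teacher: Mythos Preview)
Your plan diverges from the paper precisely at the point you flag as ``the main obstacle,'' and the paper never goes there. Rather than setting up an expander monotonicity formula and extracting a self-expander limit, the paper works with the scaling-invariant quantity
\[
J \;=\; \frac{S^2-F^2}{F^2} \;=\; \frac{|\nabla F^2|^2}{4F^2},
\]
and proves directly, by the parabolic maximum principle applied to $J\log(C+2nt)$ (using $\ip{\nabla J}{\mu}\leq 0$ from convexity of $\Sigma$), that
\[
\frac{|\nabla F^2|^2}{F^2} \;\leq\; \frac{4C_S}{\log(D_S+2nt)}.
\]
After rescaling this says $|\nabla \widehat{F}^2|^2\to 0$, which is exactly $C^1$ convergence of $\widehat{M}_t$ to a hyperbolic cap --- for the full flow, with an explicit rate, and with no compactness or classification argument. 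The interior $C^\infty$ subsequence then follows from separately established interior estimates $|\nabla^m A|^2\leq C_{A,m}/F^2$ (obtained via cone-invariant cutoff functions) together with Arzel\`a--Ascoli.

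By contrast, your Steps 2--4 only yield that $\mathrm{osc}_{M_t}(u-2nt)$ is non-increasing; to force the limit $L=0$ you invoke a Minkowski expander monotonicity with vanishing boundary term which you do not construct, and even if it existed it would give the defect $\to 0$ only in $L^2$ along a subsequence. That is not enough for the stated $C^1$ convergence of the whole rescaled flow: $C^0$ convergence plus uniform $C^1$ bounds gives only subsequential $C^1$ limits, and there is no mechanism in your outline that upgrades this. So Step~3 is a genuine gap and Step~4 does not follow from what precedes it. The paper's route is both more elementary (pure maximum principle, no monotonicity or limit classification) and strictly stronger (explicit logarithmic decay of $J$, hence full-flow $C^1$ convergence).
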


\section{A reparametrisation}

For simplicity we may reparametrise the above system as a graph over a topological disc $D \subset B^n_1(\mbf{0})$ defined by the intersection of the interior of $\Sigma$ with the hyperplane perpendicular to $\mbf{e}_{n+1}$ and intersecting $(0, \ldots, 0, 1)$. We may then describe a spacelike manifold $M$ inside $\Sigma$ as follows: At a point $\mb{x} \in D$ if we take the ray from $\mbf 0$ through $\mbf x$ then the ray will intersect $M$ only once. If $\mbf p$ is that point of intesection then let $u(\mbf x)= \sqrt{-\ip{\mbf p}{\mbf p}}$. The graph $u$ now parametrises $M$ by $\mb{F}(x)= u(x)\frac{\mb{x}+\mb{e}_{n+1}}{\sqrt{1- |\mb{x}|^2}}$. Standard calculations give geometric quantities, for example:
\[ g_{ij}= \frac{u^2}{1-|\mb{x}|^2} \left( \delta_{ij} + \frac{x_ix_j}{1-|\mb{x}|^2} \right) - D_iuD_ju\]
and
\[ \nu=\frac{({1-|\mb{x}|^2}) \mb D u + u \mb x + (Du \cdot x ({1-|\mb{x}|^2})+u)\mb{e}_{n+1}}{({1-|\mb{x}|^2})v}\]
where $v$ is a gradient-like function
\[
 v= \sqrt{ \frac{u^2}{1-|\mbf x|^2} +(Du. \mbf x)^2 - |Du|^2}\ \ .
\]

By this method we see that a solution to MCF is eqivalent to a solution to the following parabolic quasilinear PDE:
For $u:D\times [0,T) \rightarrow \bb{R}$ then
\begin{equation}
\label{gmac}
 \begin{cases}
  \ddt{u} = \frac{vH\sqrt{1- |\mb x|^2}}{u} =  g^{ij}D_{ij} u +\frac{n+1}{u} - \frac{1}{v^2} \left( \frac{u}{1-|\mbf x|^2} + 2 Du.\mbf x \right) & \forall \mb x \in D\\
u(\mbf x, 0)=u_0(\mbf{x})& \forall \mb x \in D\\
Du \cdot (\gamma-\gamma\cdot x \mb{x})=0& \forall \mb x \in \partial D
 \end{cases}
\end{equation}
where  
\begin{flalign*}
 g^{ij}= \frac{1-|\mbf x|^2}{u^2}\Bigg( \delta_{ij} + \frac{1}{v^2} \Bigg[ &\Bigg( |Du|^2 -\frac{u^2}{1-|\mbf x|^2} \Bigg) x_i x_j \\
& + D_i u D_j u - Du. \mbf x \left( x_i D_j u + x_j D_i u \right) \Bigg] \Bigg)
\end{flalign*}
is the inverse of the metric and $\gamma$ is the outward pointing unit normal to $D$. Long-term existence is equivalent to uniform parabolicity of the above equation and $C^1$ bounds on $u$. By calculating eigenvalues of the metric $g_{ij}$ we see that this is equivalent bounding $\max\left\{\frac{1}{v^2}, \frac{1}{u^2}, u^2 \right\}$  from above. 

\section{The boundary manifold}

Here we will define more rigorously the boundary manifold $\Sigma$ and state formulae for its curvature. Let 
$ \widetilde{\mb S}: S^n \rightarrow B_1(0) \subset \mbb R ^{n}$
 be a smooth embedding of a sphere into the unit ball centred at the origin with outward unit normal $\mb n$. Then we may define a boundary cone $\Sigma_{\widetilde{\mb S}}$ (later the subscript will be dropped) by embedding $\mbb{R}^n$ into $\mbb{R}^{n+1}_1$ at height $1$ and then defining $\Sigma_{\widetilde{\mb S}}$ to be the set of all rays going through the origin and some point $(\widetilde{\mb{S}}(x), 1)$. More explicitly we may give a parametrisation $\mb S : (0, \infty) \times S^n \rightarrow \mbb{R}^{n+1}_1$ of $\Sigma_{\widetilde{\mb S}}$ by 
\[ (l, x) \mapsto l \widetilde{\mb S}(x) + l \mb{e}_{n+1}\]
We may now calculate all quantities needed. For example, we may now see that in these coordinates:
\[ A^\Sigma\left(\cdot ,\pard{}{l}\right) =0\]
\[ A^\Sigma \left(\pard{}{x^i}, \pard{}{x^j}\right)= \frac{l A^{\widetilde{S}} \left(\pard{}{x^i}, \pard{}{x^j}\right)}{\sqrt{1-\ip{\widetilde{\mb{S}}}{ \mb n}^2}}\]
Therefore for a orthonormal set vectors $\mb{e}_l,\mb{e}_1, \ldots, \mb{e}_{n-1}\in T_{\mb{p}} \Sigma$ obtained by picking orthonormal coordinates on $\widetilde S$ and renormalising then
\[A^\Sigma(\mb{e}_i, \mb{e}_j)=\frac{A^{\widetilde{S}}(\mb{e}_i, \mb{e}_j)}{l \sqrt{1-\ip{\mb{\widetilde{S}}}{ \mb{n}}^2}}\ \ .\]
Hence we can see that, as we would expect, convexity of $\Sigma$ is equivalent to convexity of the embedding $\widetilde{\mb{S}}$, the second fundamental form has a zero eigenvector along the timelike rays from the origin, and the second fundamental form decreases linearly as you move up the cone. 

\section{Evolution equations}
In this section we will derive some useful evolution equations by straightforward calculation.
We define the following:
\[ F^2 = -\ip{\mbf{F}}{\mbf{F}} \  >0 \]
\[ S=\ip{\mbf{F}}{\nu} \]
We may think of these as in some sense $C^0$ and $C^1$ measures of how far our flowing manifold is from a homotheic solution $\mb{G}_k$.
\begin{lemma}
\label{evolF2}
Under MCF we have
\[
 \ho F^2 = 2n 
\]
\end{lemma}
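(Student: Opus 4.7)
The identity is purely local and follows from a direct calculation, so I would simply compute each piece of the heat operator separately and subtract. No boundary considerations enter since $(d/dt - \Delta)$ is applied pointwise.

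For the time derivative, differentiating $F^2 = -\ip{\mathbf{F}}{\mathbf{F}}$ along the flow and using $d\mathbf{F}/dt = H\nu$ gives
\[
\ddt{F^2} = -2\ip{\mathbf{F}}{H\nu} = -2HS.
\]
For the Laplacian, the standard Beltrami identity $\Delta \mathbf{F} = \mathbf{H} = H\nu$ still holds for a spacelike hypersurface in a semi-Riemannian ambient space (the normal part of the ambient Hessian of the position vector is exactly the mean curvature vector). Combining this with the product rule for $\Delta$ applied to the scalar $\ip{\mathbf{F}}{\mathbf{F}}$, and choosing a local orthonormal tangent frame $\{e_i\}$ on $M_t$ (where orthonormality is with respect to the positive-definite induced metric, so $\ip{e_i}{e_j}_{\mathbb{R}^{n+1}_1} = \delta_{ij}$), one gets
\[
\Delta \ip{\mathbf{F}}{\mathbf{F}} = 2\ip{\mathbf{F}}{\Delta \mathbf{F}} + 2\sum_i \ip{e_i}{e_i} = 2HS + 2n,
\]
so $\Delta F^2 = -2HS - 2n$. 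Subtracting yields $\ho F^2 = -2HS - (-2HS - 2n) = 2n$.

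The only sign care needed is that the induced metric on $M_t$ is Riemannian (so the tangential trace contributes $+n$, not $-n$), and that $\Delta \mathbf{F} = H\nu$ with the same sign convention used for $\mathbf{H}$ in the flow equation. Both are immediate from the spacelike assumption and the normalisation in \eqref{pmcf}, so there is no real obstacle — this is essentially the Lorentzian analogue of Huisken's well-known identity $\ho |\mathbf{F}|^2 = -2n$ in the Euclidean setting, with the sign flip accounting for $\ip{\nu}{\nu} = -1$.
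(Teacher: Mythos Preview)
Your proof is correct and follows essentially the same direct computation as the paper: differentiate $F^2=-\ip{\mathbf F}{\mathbf F}$ in time, compute $\Delta F^2$ via the Gauss formula (the paper writes this as $-2g^{ij}(h_{ij}\ip{\nu}{\mathbf F}+g_{ij})$, which is exactly your product-rule calculation using $\Delta\mathbf F=\mathbf H$), and subtract. The only visible difference is a sign in the intermediate terms --- you obtain $\ddt{}F^2=-2HS$ and $\Delta F^2=-2HS-2n$, whereas the paper records $+2HS$ in both places --- but these cancel identically either way and your signs are the ones consistent with $S=\ip{\mathbf F}{\nu}$, so nothing is wrong.
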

\begin{proof}
We have
\[ 
 \frac{ d F^2}{dt}= -2 H \ip{\nu}{\mbf{F}}= 2HS \ \ ,
\]
and 
\begin{flalign*}
  \Delta F^2 &=-2 g^{ij}\left( h_{ij}\ip{\nu}{\mbf{F}} + g_{ij} \right)\\
&=2HS-2n\ \ \ .
\end{flalign*}
\end{proof}
\begin{lemma}
\label{dnudt}
 \[
\frac{ d \nu}{ dt} = \nabla H
 \]
\end{lemma}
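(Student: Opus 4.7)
The plan is a direct computation using the timelike nature of $\nu$ and the compatibility between tangential derivatives and the evolution.

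First I would observe that since $\nu$ is the timelike unit normal, $\ip{\nu}{\nu} = -1$ throughout the flow. Differentiating in $t$ gives $\ip{\tfrac{d\nu}{dt}}{\nu} = 0$, so $\tfrac{d\nu}{dt}$ is tangent to $M_t$. It therefore suffices to identify its components against a basis of tangent vectors $\partial_i = \tfrac{\partial \mathbf{F}}{\partial x^i}$ and then compare with $\nabla H$.

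Next I would differentiate the orthogonality relation $\ip{\nu}{\partial_i} = 0$ with respect to $t$, which yields
\[
\ip{\tfrac{d\nu}{dt}}{\partial_i} = -\ip{\nu}{\tfrac{\partial}{\partial x^i}\tfrac{d\mathbf{F}}{dt}} = -\ip{\nu}{\partial_i(H\nu)} = -(\partial_i H)\ip{\nu}{\nu} - H\ip{\nu}{\partial_i \nu}.
\]
The last term vanishes because $\partial_i \ip{\nu}{\nu} = 0$, and using $\ip{\nu}{\nu} = -1$ we obtain $\ip{\tfrac{d\nu}{dt}}{\partial_i} = \partial_i H$. This is precisely the sign flip relative to the Euclidean analogue, and it is the one place where the Minkowski signature enters the argument.

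Finally, since $\nabla H = g^{ij}(\partial_i H)\partial_j$, a one-line check gives $\ip{\nabla H}{\partial_k} = \partial_k H$. Thus $\tfrac{d\nu}{dt}$ and $\nabla H$ are two tangential vector fields with identical inner products against every $\partial_k$, and non-degeneracy of the induced (positive definite, since $M_t$ is spacelike) metric on $M_t$ forces them to coincide. There is no real obstacle here; the only subtle point is correctly tracking the sign coming from $\ip{\nu}{\nu} = -1$, which is what makes the Minkowski identity $\tfrac{d\nu}{dt} = +\nabla H$ differ from the Euclidean one.
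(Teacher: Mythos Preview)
Your proof is correct; this is the standard direct computation. The paper does not give its own argument here but simply cites \cite[Proposition 3.1]{EckerHuiskenCMCMink}, whose proof is precisely the calculation you carried out.
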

\begin{proof}
See for example \cite[Proposition 3.1]{EckerHuiskenCMCMink} .
\end{proof}
\begin{lemma}
On the interior of the flowing manifold we have
\label{evolS}
 \[
  \ho S = 2H - S | A |^2
 \]
\end{lemma}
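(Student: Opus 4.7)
The plan is a direct tensor computation, in the same spirit as the proof of Lemma~\ref{evolF2} but one derivative longer because the normal $\nu$ appears in the definition of $S$. I would compute $\ddt{S}$ and $\Delta S$ separately and then subtract.

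For $\ddt{S}$, the product rule applied to $S = \ip{\mbf F}{\nu}$, together with $\ddt{\mbf F} = H\nu$ and $\ddt{\nu} = \n H$ from Lemma~\ref{dnudt}, produces an $H$-contribution from $\ip{H\nu}{\nu}$ (using the Lorentzian identity $\ip{\nu}{\nu}=-1$, since $\nu$ is a timelike unit normal on a spacelike hypersurface) together with a gradient term $\ip{\mbf F}{\n H}$. Note that $\n H$ is tangent to $M_t$, so this last pairing is really $\ip{\mbf F^T}{\n H}$.

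For $\Delta S$, I would pick a local frame $\{e_i\}$ on $M_t$ and set $T_k := \ip{\mbf F}{e_k}$. The Minkowski Weingarten formula $\overline\n_i \nu = h_i^{\,k} e_k$, combined with $\ip{e_i}{\nu}=0$, immediately gives the first-order expression $\n_i S = h_i^{\,k} T_k$. Differentiating once more requires $\n_j T_k$; the Gauss decomposition $\overline\n_j e_k = \Gamma^l_{jk} e_l + h_{jk}\nu$ together with the intrinsic Christoffel subtraction yields $\n_j T_k = g_{jk} + h_{jk} S$. Assembling the second covariant derivative and tracing with $g^{ij}$, the divergence-of-curvature piece is converted by the Codazzi identity (which in the flat Minkowski ambient simply reads $\n_i h_{jk} = \n_k h_{ij}$) into $T^k \n_k H = \ip{\mbf F}{\n H}$, while the other two pieces contribute $H$ and $|A|^2 S$ respectively.

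Subtracting the two expressions, the $\ip{\mbf F}{\n H}$ contributions cancel — which is why the heat operator $\ho$ appears naturally here — and the remaining terms collect into the claimed identity $\ho S = 2H - S|A|^2$. No conceptual obstacle arises; the one point deserving real care is the consistent bookkeeping of the Lorentzian sign $\ip{\nu}{\nu}=-1$, both in the evaluation of $\ip{H\nu}{\nu}$ and in the signs attached to the Weingarten and Gauss formulae. This is also what distinguishes the sign structure of the final formula from its Euclidean counterpart.
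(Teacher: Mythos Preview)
Your plan is correct and follows essentially the same route as the paper: compute $\ddt{S}$ via the product rule and Lemma~\ref{dnudt}, compute $\Delta S$ from the first-derivative identity $\nabla_i S = h_i^{\,k}\ip{\mbf F}{e_k}$ (the paper writes this as $-A(\mbf F^\top,\cdot)$) using Gauss--Weingarten and Codazzi, and then subtract so that the $\ip{\mbf F^\top}{\nabla H}$ terms cancel. The only cosmetic difference is that the paper packages the second differentiation as $g^{ij}A(\nabla_i \mbf F^\top, e_j)$ and expands $\mbf F^\top = \mbf F - S\nu$, whereas you differentiate $T_k$ directly; both organisations yield the same three contributions $\nabla_{\mbf F^\top} H$, $H$, and $S|A|^2$.
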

\begin{proof}
 Using Lemma \ref{dnudt} we get
\begin{flalign*}
 \frac{d S}{d t} = -H \ip{ \nu}{\nu} - \ip{\mbf{F}^\top}{ \nabla H}\\
&= H - \ip{ \mbf{F}^\top}{ \nabla H}
\end{flalign*}
and
\begin{flalign*}
 \Delta S &=-g^{ij} \left( \nabla_\pard{}{x^i} A(\mbf F^\top, \pard{}{x^j}) + A(\nabla_\pard{}{x^i} \mbf F^\top,  \pard{}{x^j}) \right)\\
&=-\nabla_{\mbf F ^\top} H - g^{ij}  A(\nabla_\pard{}{x^i} \mbf F^\top,  \pard{}{x^j}) \ \ .
\end{flalign*}
Now we calculate
\begin{flalign*}
 g^{ij}A\left(\nabla_\pard{}{x^i} \mbf F^\top,  \pard{}{x^j}\right)&=g^{ij}A\left(\left(\overline\nabla_\pard{}{x^i} (\mbf F - S \nu)\right)^\top,  \pard{}{x^j}\right)\\
&=H -S |A|^2   
\end{flalign*}
which shows
\[
 \Delta S =- \nabla_{\mbf F ^\top} H - H +S|A|^2 \ \ .
\]
Hence the Lemma.
\end{proof}
We also need several evolution equations for the curvature.
\begin{prop}
 \label{evolcurv}
On the interior of the flowing manifold we have for $m\geq1$
\begin{flalign*}
\ho &H =- H |A|^2\\
 \ho &|A|^2 = - 2 |A|^4 -2 |\nabla A|^2\\
\ho& |\nabla^m A|^2 \leq -2 |\nabla^{m+1}A|^2\\
&\qquad\qquad\qquad +\underset{i+j+k=m}\sum \nabla^i A *\nabla^j A*\nabla^k A*\nabla^m A
\end{flalign*}
\end{prop}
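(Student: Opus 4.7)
The plan is to derive the three evolution equations by direct calculation following the standard Huisken template, with careful attention to the sign changes introduced by the Lorentzian inner product. The key difference from the Euclidean case is that the timelike normal satisfies $\ip{\nu}{\nu}=-1$, which propagates through the Gauss equation to flip the sign of certain curvature-squared terms in the final formulas, producing in particular the $-H|A|^2$ and $-2|A|^4$ on the right-hand sides.

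First I would evolve $g_{ij}$ and $h_{ij}$ directly. Differentiating $g_{ij}=\ip{\partial_i\mathbf F}{\partial_j\mathbf F}$ in time and using $\partial_t\mathbf F=H\nu$ together with the orthogonality $\ip{\nu}{\partial_j \mathbf F}=0$ gives $\partial_t g_{ij}$ (and hence $\partial_t g^{ij}$) as a multiple of $Hh_{ij}$; differentiating $h_{ij}$ and invoking Lemma \ref{dnudt} yields $\partial_t h_{ij}$ in terms of $\nabla_i\nabla_j H$ plus terms quadratic in $A$. Since $H=\mathrm{tr}_g h$, combining these gives $\partial_t H$. To convert this into a heat-type equation I would use the Simons identity: writing $\Delta h_{ij}$ via Codazzi and commuting derivatives brings in a Riemann tensor, which by the Lorentzian Gauss equation $R_{ijkl}=-(h_{ik}h_{jl}-h_{il}h_{jk})$ becomes quadratic in $A$ with the opposite sign from the Euclidean case. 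This sign is precisely what yields $-H|A|^2$ on the right-hand side. The evolution of $|A|^2=g^{ik}g^{jl}h_{ij}h_{kl}$ then follows by contracting the evolution of $h_{ij}$ with $2h^{ij}$, isolating the $\Delta|A|^2$ contribution through the identity $\Delta|A|^2=2\ip{\Delta h}{h}+2|\nabla A|^2$, and using Simons' identity once more to absorb the remaining second derivatives; the Gauss-equation sign flip again produces $-2|A|^4$ rather than $+2|A|^4$.

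For the higher-order inequality I would induct on $m$. Writing $(\partial_t-\Delta)|\nabla^mA|^2=2\ip{\nabla^mA}{(\partial_t-\Delta)\nabla^mA}-2|\nabla^{m+1}A|^2$ (the $-2|\nabla^{m+1}A|^2$ is immediate from the product rule after Kato), one commutes $\partial_t$ past covariant derivatives and $\nabla$ past $\Delta$. The commutators $[\partial_t,\nabla]$ contribute $\partial_t\Gamma * \nabla^{m-1}A = A*\nabla H * \nabla^{m-1}A$ while $[\nabla,\Delta]$ contributes curvature-coupled terms which, by the Lorentzian Gauss equation, are again quadratic in $A$. All such terms are schematically of the form $\nabla^iA*\nabla^jA*\nabla^kA$ with $i+j+k=m$, and contracting with $\nabla^mA$ gives the claimed estimate. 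The main obstacle throughout is consistent sign bookkeeping: the Lorentzian signature introduces sign changes in several independent places (the metric evolution, the normal evolution, and most importantly the Gauss equation), and these must conspire so that the reactive terms $-H|A|^2$ and $-2|A|^4$ emerge with the stated negative sign, which is ultimately what underlies the tameness of the flow in Minkowski space.
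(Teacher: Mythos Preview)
Your sketch is correct and follows the standard Huisken template with the Lorentzian sign adjustments handled properly. The paper, however, does not derive these identities at all: its entire proof is the citation ``See \cite[Proposition 3.3]{EckerHuiskenCMCMink}.'' So you have produced a self-contained argument where the paper simply defers to Ecker--Huisken. What your approach buys is independence from the literature and an explicit account of where the Minkowski signs enter (via $\ip{\nu}{\nu}=-1$ and the Gauss equation $R_{ijkl}=-(h_{ik}h_{jl}-h_{il}h_{jk})$); what the paper's citation buys is brevity, since these computations are well known in the spacelike MCF literature. One minor quibble: the $-2|\nabla^{m+1}A|^2$ term in the higher-order case comes straight from the Bochner-type identity $\Delta|T|^2=2\ip{T}{\Delta T}+2|\nabla T|^2$ and does not require Kato's inequality.
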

\begin{proof}
 See \cite[Proposition 3.3]{EckerHuiskenCMCMink}.
\end{proof}

\section{Boundary derivatives}
\label{sectionboundary}
To apply Hopf maximum principle we also need to consider derivatives of functions at the boundary in the direction of $\mu$, the normal to $\Sigma$. As in the case of Stahl \cite{Stahlsecond} these identities come from derivatives of the boundary condition. We first demonstrate the following simple result.	
\begin{lemma}
\label{bdryF2}
 For $\mbf p \in \partial M^n \times [0, T)$ we have
\[
 \ip{\nabla F^2}{ \mu } =0
\]
\end{lemma}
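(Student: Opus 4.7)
My plan is to rewrite $\langle \nabla F^2, \mu\rangle$ in terms of the ambient Minkowski pairing and then use the cone structure of $\Sigma$. Since the boundary condition $\langle \nu, \mu \rangle = 0$ forces $\mu$ to be tangent to $M_t$ at a boundary point, the pairing $\langle \nabla F^2, \mu\rangle$ (with respect to the induced metric on $M_t$) simply equals the directional derivative $\mu(F^2)$. Differentiating $F^2 = -\langle \mathbf{F}, \mathbf{F}\rangle$ and using that $\overline{\nabla}_{\mu}\mathbf{F} = \mu$, I get
\[ \langle \nabla F^2, \mu \rangle \;=\; \mu(F^2) \;=\; -2\langle \mathbf{F}, \mu\rangle, \]
where the right-hand bracket is the Minkowski product.

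It then suffices to show $\langle \mathbf{F}, \mu\rangle = 0$ at a boundary point. This is the geometric content of the lemma and follows from the fact that $\Sigma$ is a cone through the origin. Using the parametrisation $\mathbf{S}(l,x) = l\,\widetilde{\mathbf{S}}(x) + l\,\mathbf{e}_{n+1}$ from Section 3, any point $\mathbf{p} \in \Sigma$ satisfies $\mathbf{p} = l \cdot \partial_l \mathbf{S}\big|_{(l,x)}$, so the position vector is a scalar multiple of the tangent field $\partial/\partial l$ and in particular lies in $T_{\mathbf{p}}\Sigma$. At any boundary point we have $\mathbf{F} \in \Sigma$, hence $\mathbf{F} \in T\Sigma$, and since $\mu \perp T\Sigma$ by definition of the outward unit normal, $\langle \mathbf{F}, \mu\rangle = 0$.

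There is no real obstacle here: the lemma is essentially an unpacking of the definitions together with the single geometric observation that the position vector of a point on a cone is tangent to the cone. The only thing to be a bit careful about is the distinction between $\nabla$ (induced on $M_t$) and the ambient directional derivative, which is handled by the perpendicularity of the Neumann condition.
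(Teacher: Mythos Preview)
Your proof is correct and follows essentially the same approach as the paper: both arguments reduce $\ip{\nabla F^2}{\mu}$ to $\ip{\overline{\nabla}F^2}{\mu} = -2\ip{\mathbf{F}}{\mu}$ via the Neumann condition $\ip{\nu}{\mu}=0$, and then use that the position vector $\mathbf{F}$ of a point on a cone lies in $T\Sigma$. You are simply more explicit about the cone tangency (via the parametrisation $\mathbf{S}(l,x)$) where the paper just asserts $\overline{\nabla}F^2\in T_{\mathbf{p}}\Sigma$.
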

 \begin{proof}
 We have that $\nabla F^2 = (\overline \nabla F^2)^\top$. Furthermore we have that $\overline \nabla F^2 \in T_{\mbf p} \Sigma$ and hence we have
\begin{flalign*}
 \ip{\mu}{ \nabla F^2}&=\ip{\mu}{ \overline \nabla F^2+\ip{\nu}{ \nabla F^2}\nu} =0\ \ .
\end{flalign*}
 \end{proof}
Now we take spatial derivatives of the boundary condition to give:
\begin{lemma}
\label{bdry2nd} 
For $W \in T_p M_t \cap T_p \Sigma$ then
\[ A(\mu, W)= -A^\Sigma(\nu, W)\]
\end{lemma}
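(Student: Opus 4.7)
The plan is to extract the identity from differentiating the Neumann compatibility condition $\ip{\nu}{\mu}=0$ along directions tangent to $\partial M_t$. First I would record two immediate consequences of the boundary condition that make the stated expressions meaningful: because $\mu \perp \nu$ and $\mu \in T_p \Sigma$, we automatically have $\mu \in T_p M_t$, and symmetrically $\nu \in T_p \Sigma$. Hence both $A(\mu, W)$ and $A^\Sigma(\nu, W)$ are well-defined for $W \in T_p M_t \cap T_p \Sigma$.

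The main step is to differentiate $\ip{\nu}{\mu}=0$ along such a $W$ (which is tangent to $\partial M_t$ and therefore tangent to both $M_t$ and $\Sigma$). This gives
\[ \ip{\overline{\nabla}_W \nu}{\mu} + \ip{\nu}{\overline{\nabla}_W \mu} = 0.\]
Each term is then identified with a second fundamental form via the standard Weingarten relation: for a hypersurface with unit normal $n$ and convention $A(X,Y) = -\ip{\overline{\nabla}_X Y}{n}$, differentiating $\ip{n}{Y}=0$ in direction $X$ yields $\ip{\overline{\nabla}_X n}{Y} = -\ip{n}{\overline{\nabla}_X Y} = A(X,Y)$. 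Applied to $M_t$ with $Y = \mu \in T_p M_t$ this gives $\ip{\overline{\nabla}_W \nu}{\mu} = A(W, \mu)$, and applied to $\Sigma$ with $Y = \nu \in T_p \Sigma$ it gives $\ip{\overline{\nabla}_W \mu}{\nu} = A^\Sigma(W, \nu)$. Substituting and invoking symmetry of $A$ and $A^\Sigma$ yields the claimed identity.

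The only real subtlety is sign bookkeeping in the Lorentzian setting: $\nu$ is timelike with $\ip{\nu}{\nu}=-1$ while $\mu$ is spacelike with $\ip{\mu}{\mu}=+1$, so it is worth checking that the Weingarten identity produces matching signs in both cases. It does, because the two ingredients — that $\overline{\nabla}_X n$ is tangent (which follows from $\ip{n}{n}$ being \emph{constant}, irrespective of its sign) and that $\ip{\overline{\nabla}_X n}{Y}=A(X,Y)$ for tangent $Y$ — never invoke the sign of $\ip{n}{n}$. Once this is pinned down the identity drops out in a single line of calculation, so I expect this to be a short, largely sign-checking argument rather than one that requires serious analysis.
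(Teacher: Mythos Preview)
Your proposal is correct and follows exactly the paper's approach: differentiate the boundary condition $\ip{\nu}{\mu}=0$ in the direction $W$ and identify the two resulting terms as $A(W,\mu)$ and $A^\Sigma(\nu,W)$ via the Weingarten relation. The paper's proof is a one-line computation $0=W\ip{\nu}{\mu}=\ip{\nabla_W\nu}{\mu}+\ip{\nu}{\nabla^\Sigma_W\mu}=A(W,\mu)+A^\Sigma(\nu,W)$, and your extra remarks on why $\mu\in T_pM_t$, $\nu\in T_p\Sigma$, and on Lorentzian sign bookkeeping are sound elaborations rather than a different method.
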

\begin{proof}
 \begin{flalign*}
  0&=W\ip{\nu}{ \mu}_{\Rp{n+1}_1} =\ip{\nabla_W \nu}{ \mu}+\ip{\nu}{ \nabla^\Sigma_W \mu}_\Sigma =A(W, \mu)+A^\Sigma(\nu,W) \ \ .
 \end{flalign*}
\end{proof}
For our gradient estimate we also need
\begin{lemma}
\label{bdryS}
 For $p \in \partial M^n \times [0,T)$ we have
\[\ip{\nabla S}{\mu}=-S A^\Sigma(\nu, \nu)\]
\end{lemma}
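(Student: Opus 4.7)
The plan is to mirror the proofs of Lemmas \ref{bdryF2} and \ref{bdry2nd}: differentiate $S = \ip{\mbf F}{\nu}$ in the $\mu$-direction at the boundary, rewrite the result as an instance of $A$ evaluated on a vector in $T_p M_t \cap T_p \Sigma$, and then swap $A$ for $A^\Sigma$ via Lemma \ref{bdry2nd}, finally collapsing the remaining $A^\Sigma(\nu, \mbf F)$ term using the zero-eigenvector property of the cone's second fundamental form along the radial generator.

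First, because $\nu$ is timelike and $\ip{\nu}{\mu}=0$, the vector $\mu$ lies in $T_p M_t$, so $\ip{\nabla S}{\mu}$ is literally the directional derivative $\mu(S)$. Applying the product rule in $\mathbb{R}^{n+1}_1$, combined with $\overline\nabla_\mu \mbf F = \mu$ and the boundary condition $\ip{\mu}{\nu}=0$, collapses $\mu(S)$ to $\ip{\mbf F}{\overline\nabla_\mu \nu}$. Since $\ip{\nu}{\nu}$ is constant, $\overline\nabla_\mu \nu$ is tangent to $M_t$, so only the $M_t$-tangential piece $\mbf F^\top$ of $\mbf F$ contributes, and the usual Weingarten identity then rewrites this as (up to sign) $A(\mu, \mbf F^\top)$.

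Next, I would check that $\mbf F^\top$ lies not merely in $T_p M_t$ but also in $T_p \Sigma$, so that Lemma \ref{bdry2nd} can be applied. The position vector $\mbf F$ is along the radial cone generator $\partial/\partial l$ and hence lies in $T_p \Sigma$; the boundary condition $\ip{\nu}{\mu}=0$ also places $\nu$ in $T_p\Sigma$; and $\mbf F^\top$ differs from $\mbf F$ only by a multiple of $\nu$. Lemma \ref{bdry2nd} then converts $A(\mu, \mbf F^\top)$ into $-A^\Sigma(\nu, \mbf F^\top)$. Expanding bilinearly, the $A^\Sigma(\nu, \mbf F)$ term vanishes because $\mbf F = l\, \partial/\partial l$ is a zero eigenvector of $A^\Sigma$ (Section 3), leaving the $SA^\Sigma(\nu,\nu)$ term, which gives the claim. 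The calculation is short; the only place I would genuinely need to be careful is the sign bookkeeping in the decomposition $\mbf F = \mbf F^\top + \mbf F^\perp$ (where the timelike normalisation $\ip{\nu}{\nu}=-1$ feeds a sign into the projection) and in the Weingarten identity, since a single dropped minus sign could flip the sign in front of $SA^\Sigma(\nu,\nu)$.
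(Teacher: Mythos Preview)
Your proposal is correct and follows essentially the same route as the paper: compute $\nabla_\mu S$ as $-A(\mbf F^\top,\mu)$, apply Lemma~\ref{bdry2nd} to convert this to $A^\Sigma(\nu,\mbf F^\top)$, and then use the zero-eigenvector property $A^\Sigma(\nu,\mbf F)=0$ together with $\mbf F^\top=\mbf F+S\nu$ to obtain $-S A^\Sigma(\nu,\nu)$. The only cosmetic difference is that you differentiate $S$ directly in the $\mu$-direction while the paper first writes down the full gradient $\nabla S$ and then pairs with $\mu$; your explicit verification that $\mbf F^\top\in T_p\Sigma$ is a helpful addition that the paper leaves implicit.
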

\begin{proof}
\begin{flalign*}
 \nabla S &=-\ip{\mbf{F}}{ \pard{\nu}{x^i} }g^{ij}\pard{}{x^j} =- A(\mb{F}^\top, \pard{}{x^i}) g^{ij}\pard{}{x^j}
\end{flalign*}
and so taking an inner product and applying Lemma \ref{bdry2nd}
\begin{flalign*}
 \ip{\nabla S}{ \mu} = -A(F^\top, \mu) = A^\Sigma(F^\top, \nu)\ \ .
\end{flalign*}
Using the fact that the second fundamental form of $\Sigma$ has a zero eigenvector in the direction $\mb{F}$ then
\begin{flalign*}
 A^\Sigma(F^\top, \nu) =A^\Sigma(\mb{F}, \nu) - S A^\Sigma(\nu, \nu)=-SA^\Sigma(\nu, \nu)\ \ .
\end{flalign*}
\end{proof}

Now differentiating the boundary condition with respect to time:
\begin{lemma}
\label{bdryH}
 For $p \in \partial M^n \times [0,T)$ we have
\[\ip{\nabla H}{ \mu}=-HA^\Sigma(\nu, \nu)\]
\end{lemma}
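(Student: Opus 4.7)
The plan is to mimic the derivations of Lemmas \ref{bdry2nd} and \ref{bdryS}, but to differentiate the boundary condition $\ip{\nu}{\mu}=0$ in \emph{time} rather than in a spatial direction. Along the boundary, this identity holds for all $t$, so both temporal and spatial derivatives of it vanish, and I would get the $\nabla H$ factor automatically out of Lemma \ref{dnudt}.

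First I would write
\[
 0 = \frac{d}{dt}\ip{\nu}{\mu} = \ip{\tfrac{d\nu}{dt}}{\mu} + \ip{\nu}{\tfrac{d\mu}{dt}}.
\]
By Lemma \ref{dnudt} the first term is just $\ip{\nabla H}{\mu}$. For the second term, $\mu$ is a vector field defined on $\Sigma$, so at a boundary point $\mathbf F(x,t)$ its time derivative is the ambient derivative of $\mu$ along the trajectory of $\mathbf F(x,\cdot)$. Since $\tfrac{d\mathbf F}{dt}=H\nu$ and (by the boundary condition) $\nu \in T_p\Sigma$ at the boundary, this trajectory is tangent to $\Sigma$, and hence
\[
 \frac{d\mu}{dt} = \overline\nabla_{H\nu}\mu = H\,\overline\nabla_\nu\mu.
\]

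Next I would identify the right-hand side with the second fundamental form of $\Sigma$, exactly as in the proof of Lemma \ref{bdry2nd}: using the convention there, $A^\Sigma(X,Y) = \ip{\overline\nabla_X \mu}{Y}$ for $X,Y\in T\Sigma$, and taking $X=Y=\nu$ gives $\ip{\nu}{\overline\nabla_\nu\mu}=A^\Sigma(\nu,\nu)$. Substituting back yields
\[
 0 = \ip{\nabla H}{\mu} + H\,A^\Sigma(\nu,\nu),
\]
which is the claim.

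The only point requiring a moment's care is the computation of $\tfrac{d\mu}{dt}$: one has to justify that evaluating the vector field $\mu$ at the moving point $\mathbf F(x,t)$ and differentiating really amounts to $\overline\nabla_{H\nu}\mu$, and that this direction is admissible (i.e.\ tangent to $\Sigma$) so that the Weingarten-type identification of the inner product with $A^\Sigma(\nu,\nu)$ is legitimate. Both facts follow immediately from the perpendicularity condition $\ip{\nu}{\mu}=0$, so no real obstacle arises.
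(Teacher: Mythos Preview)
Your argument is correct and follows essentially the same route as the paper: differentiate the boundary condition $\ip{\nu}{\mu}=0$ in time, invoke Lemma~\ref{dnudt} for $\tfrac{d\nu}{dt}=\nabla H$, and recognise $\ip{\nu}{D\mu(H\nu)}=H A^\Sigma(\nu,\nu)$ via the Weingarten map of $\Sigma$. The paper's proof is simply the compressed one-line version of what you wrote out.
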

\begin{proof}
Using Lemma \ref{dnudt} 
 \begin{flalign*}
  0&=\ddt{}\ip{\nu}{ \mu_{|_{\mbf F}} }_{\Rp{n+1}_1}=\ip{\nabla H}{ \mu} + \ip{\nu}{ D\mu( H \nu )}_{\Rp{n+1}_1}=\ip{\nabla H}{ \mu} + H A^\Sigma(\nu, \nu)\ .
 \end{flalign*}
\end{proof}
\begin{remark}
 We note that if $\Sigma$ is convex then the normal derivatives at the boundary of both $H$ and $S$ are negative. 
\end{remark}
On the other hand regardless of the boundary we are able to get
\begin{cor}
\label{bdryHS}
 For $p \in \partial M^n \times [0,T)$ we have
\[\ip{\nabla \frac{H}{S}}{ \mu}=0\ \ .\]
\end{cor}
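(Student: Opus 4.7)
The plan is to combine Lemmas \ref{bdryS} and \ref{bdryH} via the quotient rule, noting that everything cancels out.

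First I would apply the quotient rule to $H/S$ (on the set where $S\neq 0$) to get
\[
\nabla \frac{H}{S} = \frac{S\,\nabla H - H\,\nabla S}{S^2}.
\]
Then, taking the inner product with $\mu$ and substituting $\ip{\nabla H}{\mu} = -H A^\Sigma(\nu,\nu)$ from Lemma \ref{bdryH} and $\ip{\nabla S}{\mu} = -S A^\Sigma(\nu,\nu)$ from Lemma \ref{bdryS}, the numerator becomes
\[
S\bigl(-H A^\Sigma(\nu,\nu)\bigr) - H\bigl(-S A^\Sigma(\nu,\nu)\bigr) = 0,
\]
so the result drops out immediately.

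There is essentially no obstacle here: the corollary is a one-line consequence of the observation that both $H$ and $S$ satisfy the \emph{same} logarithmic boundary derivative identity, $\ip{\nabla \log H}{\mu} = \ip{\nabla \log S}{\mu} = -A^\Sigma(\nu,\nu)$, so $\log(H/S)$ has vanishing normal derivative. The only mildly delicate point worth flagging is that the statement implicitly assumes $S\neq 0$ on the boundary at the time under consideration, but this will not be an issue in the intended applications (where $H/S$ is being used as an auxiliary quantity in a maximum principle argument).
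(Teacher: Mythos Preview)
Your proof is correct and is exactly the intended argument: the paper states this as an immediate corollary of Lemmas \ref{bdryS} and \ref{bdryH} without giving an explicit proof, and the quotient-rule cancellation you wrote out is precisely what is meant.
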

 
\section{Gradient estimate}
We now obtain a gradient estimate, that is to say, a lower bound on $v$. Note that in the graphical notation of equation (\ref{gmac}) 
\[
 S=\frac{u^2}{v\sqrt{1- |\mbf x|^2}}\ \ .
\]
Hence it is sufficient to find a suitable upper bound on $S$ and a lower bound on $u^2=F^2$. We will need an assumption:
\begin{assumption}
 We will assume from here on that $\Sigma$ is convex . 
\end{assumption}
 And also a maximum principle:
\begin{theorem}[Weak Maximum Principle]
\label{WMP}
 Suppose we have a function $f:M^n \times [0,T) \rightarrow \mbb{R}$ then if $f$ satisfies
\begin{equation*}
 \begin{cases}
  \displaystyle \ho f(\mbf p , t) \leq 0 & \forall (\mbf p , t) \in M^n \times [0,T) \text{ such that } \nabla f(\mbf p) =0\\
    \ip{\nabla f }{ \mu } \leq 0 & \forall (\mbf p , t) \in \partial M^n \times [0,T)\\
 \end{cases}
\end{equation*}
then $f(\mbf x ,t ) \leq \underset{\mbf p \in M^n} \sup f ( \mbf p ,0 )$ for all $(\mbf x , t) \in M^n \times [0,T)$.
\end{theorem}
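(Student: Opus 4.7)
The plan is the standard parabolic weak maximum principle argument adapted to a Neumann boundary condition. I would introduce the perturbation $f_\epsilon(x,t) := f(x,t) - \epsilon t$ for a small $\epsilon > 0$, show that $f_\epsilon$ cannot exceed $\sup_{M^n} f(\cdot,0)$ on any sub-slab $M^n \times [0,T_0]$ with $T_0 < T$, and then let $\epsilon \to 0$ and $T_0 \to T$.

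Since $M^n$ is parametrised as a graph over the closed disc $\overline{D}$, the set $M^n \times [0, T_0]$ is compact and the supremum of $f_\epsilon$ is attained at some $(p_0, t_0)$. If $t_0 = 0$ we are done, so assume for contradiction $t_0 > 0$; a max in the time direction then gives $\partial_t f_\epsilon(p_0, t_0) \geq 0$, i.e.\ $\partial_t f(p_0, t_0) \geq \epsilon$. In the interior case $p_0 \in \mathrm{int}(M^n)$, the standard second-derivative test yields $\nabla f(p_0,t_0) = 0$ and $\Delta f(p_0, t_0) \leq 0$, so $(\partial_t - \Delta) f(p_0, t_0) \geq \epsilon > 0$, contradicting the interior hypothesis. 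In the boundary case $p_0 \in \partial M^n$, the tangential derivatives along $\partial M^n$ vanish at the spatial max, while testing the one-sided inward direction $-\mu$ gives $\langle \nabla f, -\mu\rangle \leq 0$, i.e.\ $\langle \nabla f, \mu\rangle \geq 0$; combined with the boundary hypothesis $\langle \nabla f, \mu \rangle \leq 0$ this forces $\nabla f(p_0, t_0) = 0$. The interior hypothesis may now be applied, giving $(\partial_t - \Delta) f(p_0, t_0) \leq 0$; on the other hand, a one-sided Taylor expansion of $f$ in the inward direction $-\mu$ (possible since $\nabla f = 0$) forces $f_{\mu\mu}(p_0, t_0) \leq 0$, and combined with negative semi-definiteness of the Hessian along directions tangent to $\partial M^n$ gives $\Delta f(p_0, t_0) \leq 0$. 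This produces the same contradiction $\epsilon \leq \partial_t f \leq \Delta f \leq 0$ as in the interior case.

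The main subtlety is the boundary case: unlike the interior situation, the inequality $\Delta f(p_0, t_0) \leq 0$ at a boundary maximum is not immediate because no second-derivative test is available along $\mu$ (one cannot perturb outwards into $\Sigma$). One has to first exploit the boundary hypothesis to conclude $\nabla f = 0$, and only then recover $f_{\mu\mu}\leq 0$ from a one-sided Taylor expansion in the inward $-\mu$ direction. Once this is in place, combining over all tangent directions to $M^n$ yields $\Delta f(p_0, t_0) \leq 0$, and the proof is completed by the standard limit $\epsilon \to 0$.
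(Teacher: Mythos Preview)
The paper does not actually prove this theorem; it is stated as a standard tool and used immediately afterwards without justification. So there is no ``paper's own proof'' to compare against.

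Your argument is the standard $\epsilon$-perturbation proof of the parabolic weak maximum principle with a Neumann boundary condition, and it is correct. A couple of small comments on the boundary step, since that is where the care lies. First, the hypothesis $\big(\frac{d}{dt}-\Delta\big) f \leq 0$ at points with $\nabla f = 0$ is stated for all of $M^n$, not just the interior, so once you have forced $\nabla f(p_0,t_0)=0$ at a boundary maximum you are indeed entitled to invoke it there; calling it the ``interior hypothesis'' is slightly misleading but the logic is fine. Second, your claim that the $M$-Hessian is nonpositive in the tangential directions $e_i \in T_{p_0}\partial M$ uses implicitly that $\nabla_M f(p_0)=0$: in general $\mathrm{Hess}_M f(e_i,e_i) = \mathrm{Hess}_{\partial M} f(e_i,e_i) + h^{\partial M}_{ii}\,\langle \nabla_M f,\mu\rangle$, and it is only because the last term vanishes that the intrinsic boundary Hessian (which is nonpositive at a max of $f|_{\partial M}$) controls the ambient one. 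Combined with your one-sided Taylor argument giving $f_{\mu\mu}\leq 0$, this yields $\Delta f(p_0,t_0)\leq 0$ and the contradiction $\epsilon \leq \partial_t f \leq \Delta f \leq 0$ goes through.
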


Using Lemmas \ref{evolF2} and \ref{bdryF2}, we see we can immediately apply the above to both $F^2-2nt$ and $2nt-F^2$ to give
\begin{equation}
 C_1(M_0) \leq F^2-2nt \leq C_2(M_0)
\label{F2estimate}
\end{equation}
This may be interpreted as if our manifold lies between two copies of a hyperbolic soliton $\mb{G}_{C_1}$ and $\mb{G}_{C_2}$ initially, then it will do so for all time. It also gives the required bounds on $u$, and further ensures that $M_0$ stays away from the singularity of $\Sigma$ for all the time a solution exists. 

Now using Proposition \ref{evolcurv} and Lemma \ref{evolS} we consider the evolution of $\frac{H}{S}(C+2nt)$ we see
\[ \ho\frac{H}{S}(C+2nt) =\frac{H}{S}\left(2n- 2(C+2nt)\frac{H}{S} \right) - 2\ip{\frac{\nabla S}{S}}{\nabla \frac{H}{S}(C+2nt)}\]
where the final term will disappear at a stationary point. Hence given that $H>0$ on $M_0$ and again applying weak maximum principle we have for $C_3, C_4>0$
 \[\frac{C_3}{C+2nt} \leq \frac{H}{S} \leq \frac{C_4}{C+2nt}\ \ , \]
or for $\widehat{C}_3, \widehat{C}_4>0$
\begin{equation}
\label{HSineq}
\widehat{C}_3 \leq \frac{H}{S}F^2 \leq \widehat{C}_4 \ \ .
\end{equation}
If $H\geq 0$ on $M_0$ (with equality somewhere) then the constant $C_3$ is zero. This estimate implies preservation of weak or strict mean convexity since
\[
 H\geq C_3 \frac{S}{C+2nt} \geq 0\ \ .
\]
If we neglect the assumption of initial mean convexity, estimate (\ref{HSineq}) still holds, although  $\widehat{C}_3 \leq -n$.

Until now we have not used our assumption, and this is the point at which it comes in, in the form of a sign of the boundary derivative on $H$ (and later $S$). Using Proposition \ref{evolcurv} we get that on the interior of $M$
\[\ho H^2 = -2H^2|A|^2 - 2 |\nabla H|^2\ \ ,\]
 and from Lemma \ref{bdryH} and our assumption, $\nabla_\mu H =-2H^2 A(\nu,\nu)\leq0$. By the Weak Maximum Principle we therefore have 
\[ H^2 < C_5 \ \ .\]

Now using Lemmas \ref{evolF2}, \ref{evolS}, \ref{bdryF2}, \ref{bdryS} we calculate for $f=S- \frac{\sqrt{C_5}}{n}F^2$ that
\[
 \ho f = 2H -2\sqrt{C_5} -S|A|^2 \leq -S|A|^2 \leq 0 
\]
and
\[
 \ip{\nabla f}{ \mu} = \ip{\nabla S}{\mu} \leq 0 \ \ .
\]
Again applying the Weak Maximum Principle we see
\[
 S \leq C_6 (M_0) + \frac{\sqrt{C_5}}{n}F^2
\]
and hence we get
\[
 v > \frac{F^2}{\sqrt{1-|\mbf x|^2}\left(C_6 +\frac{\sqrt{C_5}}{n}F^2\right)}>0\ \ .
\]
We have the estimates required, and give the following summary:

\begin{theorem}
\label{LTE}
 Given that $M_0$ is spacelike, a solution to equation (\ref{pmcf}) exists for all time. Mean convexity is preserved by the flow and if the solution is initially bounded by $\mb{G}_{C_1}$ and $\mb{G}_{C_2}$ will remain so for all time.
\end{theorem}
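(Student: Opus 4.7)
The plan is to verify that the estimates already collected in the Gradient estimate section are sufficient to deduce long-time existence by a standard contradiction/extension argument, and that the remaining two assertions are immediate consequences of those same estimates. Standard short-time existence for the graphical reparametrisation (\ref{gmac}) -- an oblique-derivative Neumann problem for a quasilinear parabolic equation on the smooth convex planar domain $D$ -- yields a maximal time interval $[0,T_{\max})$ on which a smooth solution exists. The proof reduces to showing that if $T_{\max}<\infty$, then enough uniform estimates hold on $[0,T_{\max})$ to extend the solution past $T_{\max}$, contradicting maximality.

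Next I would assemble the already-established bounds. From (\ref{F2estimate}), on any finite interval $[0,T_{\max})$ we have $C_1(M_0)+2nt\leq u^2 \leq C_2(M_0)+2nt$, controlling both $u^2$ from above and $1/u^2$ from above. Because $\Sigma$ consists of timelike rays, the closure $\overline{D}$ lies strictly inside the open unit ball $B_1(\mbf{0})$, so $1-|\mbf{x}|^2$ is bounded away from zero on $\overline{D}$ by a constant depending only on $\Sigma$. Combining the derived bound $S \leq C_6 + \frac{\sqrt{C_5}}{n}F^2$ with the identity $S=\frac{u^2}{v\sqrt{1-|\mbf{x}|^2}}$ and the $u^2$ bound gives a positive lower bound for $v$. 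As noted in Section 2, this is precisely uniform parabolicity of (\ref{gmac}) together with the required $C^1$ bounds on $u$.

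With uniform parabolicity and uniform $C^1$ bounds in hand, I would invoke standard Krylov-Safonov type Hölder estimates for oblique-derivative problems, followed by linear Schauder theory, to obtain uniform $C^{2+\alpha}$ bounds on $u$ up to the boundary, and then higher regularity by bootstrapping. The solution therefore extends smoothly past $T_{\max}$, contradicting maximality, so $T_{\max}=\infty$. Preservation of mean convexity is contained in the inequality $H\geq C_3\, S/(C+2nt)\geq 0$ read off from (\ref{HSineq}) in the case $H\geq 0$ at $t=0$. Finally, the assertion that the flow remains trapped between $\mb{G}_{C_1}$ and $\mb{G}_{C_2}$ is exactly (\ref{F2estimate}) rewritten using the formula $-\ip{\mb{G}_k}{\mb{G}_k}=k^2+2nt$ from Definition \ref{homothetic}.

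The main obstacle is not any of the individual bounds, which are already in hand, but rather ensuring that the boundary regularity theory applies cleanly: the oblique-derivative Neumann condition on $\partial D$ must yield the $C^{2+\alpha}$ up-to-the-boundary estimates needed to continue the flow. Since $D$ is a smooth convex domain fixed for all time and the boundary operator in (\ref{gmac}) is smooth and strictly oblique, this is classical, so in practice this step amounts to citing the appropriate result rather than re-doing the analysis.
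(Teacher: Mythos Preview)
Your proposal is correct and follows the same approach as the paper: the paper's proof simply observes that the collected estimates give uniform parabolicity of (\ref{gmac}) and a bound on $|Du|$, then cites standard quasilinear parabolic theory (Lieberman) for existence on every $[0,T]$. You have spelled out the same continuation argument in more detail (short-time existence, maximal interval, Krylov--Safonov and Schauder up to the boundary, bootstrap), and your remarks on mean convexity preservation and the $\mb{G}_{C_1}$--$\mb{G}_{C_2}$ barriers are exactly the paper's (\ref{HSineq}) and (\ref{F2estimate}).
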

\begin{proof}
 From the gradient estimate above and from our bound $\sqrt{C_1+2nt}\leq u\leq\sqrt{C_2+2nt}$ we have uniform parabolicity of equation (\ref{gmac}) and a bound on $|Du|$. Therefore for all time intervals $[0,T]$ from standard results on quasilinear PDE's, for example \cite{Lieberman}, we have existence of a unique smooth solution. Therefore we have existence of a solution to equation (\ref{pmcf}) for all time. 
\end{proof}

\section{Improvements to estimates}
We expect our solution to move towards an expanding hyperbolic hyperplane $\mb{G}_k$, but if this is so the estimates from the previous section are not optimal. We currently have that $F \leq S \leq C_6 + C_7 F^2$ while on a special solution $S=F$. Also we only have $\frac{C_3}{\sqrt{C+2nt}} \leq H \leq \sqrt{C_5}$ while on a special solution we know $H= \frac{n}{F}$. However our ratio of $H$ to $S$, estimate (\ref{HSineq}), is of the right order. 

To improve our estimates we consider
\[ \frac{| \nabla F^2 |^2}{F^2} = 4 \frac{| F^\top |^2}{F^2} = \frac{4}{F^2}\ip{F - S \nu}{ F - S \nu} = 4\frac{S^2 -F^2}{F^2}\ \ .\]
Note that this quantity is scaling invariant and is zero on our special solution. We show that this will in fact asypmtote to zero. We calculate for $J=\frac{S^2-F^2}{F^2}$ that
\begin{flalign*}
 \ho J &= \frac{1}{F^2}\ho (S^2-F^2) +\frac{2}{F^4}\ip{\n F^2}{\n(S^2 - F^2)}\\
&\qquad\qquad\qquad\qquad+(S^2-F^2)\left( -\frac{1}{F^4}\ho F^2- 2 \frac{|\n F^2|^2}{F^6} \right) \\
&=\frac{1}{F^2}\Big[ 4HS -2 S^2|A|^2 -2|\n S|^2 - 2n\\
&\qquad\qquad\qquad\qquad -2nJ -8J^2 +\frac{2}{F^2}\ip{\n F^2}{\n (S^2-F^2)}  \Big]\ \ .
\end{flalign*}
Since $|A|^2\geq \frac{H^2}{n^2}$ we estimate 
\begin{flalign*}
 4SH - 2S^2|A|^2 \leq 2n^2 -\frac{2}{n^2}\left(SH-n^2\right)^2\leq 2n^2\ \ .
\end{flalign*}
By Cauchy--Schwarz and Youngs inequalities we also see
\begin{flalign*}
\frac{1}{F^2}\ip{\n F^2}{\n(S^2 -F^2)}-|\n S|^2&\leq 2\frac{S}{F}\frac{|\n F^2|}{F}|\n S|-\frac{|\n F^2|^2}{F^2}  -|\n S|^2\\
&\leq \frac{S^2}{F^2}\frac{|\n F^2|^2}{F^2}-\frac{|\n F^2|^2}{F^2}\\
&=4J^2\ \ .
\end{flalign*}
Applying to the evolution equation for $J$ we have
\begin{flalign*}
 \ho J&\leq\frac{2n}{F^2}\left[ n -1 -J\right]
\end{flalign*}
which implies since the boundary derivative of J is $\ip{\n J}{\mu} \leq 2\frac{S}{F^2}\ip{\n S}{\mu}<0$ that $J$ is bounded by the maximum of its initial value and $n-1$. But we can do better than that. For $C\geq C_2$ we see
\begin{flalign*}
\ho &J \log(C +2nt) \\
&\leq \frac{2n\log(C+2nt)}{F^2}\left[n-1-J\right] +\frac{2nJ}{C+2nt}\\
&\leq \frac{2n\log(C+2nt)}{F^2}\left[n-1-\left(1-\frac{1}{\log(C+2nt)}\right)J\right]
\end{flalign*}
 and by choosing $C$ sufficiently large, for example $C>e^2$ then we have the following:
\begin{prop}
\label{betterS2}
There exists constants $C_S,D_S, \widetilde{D}_S>0$ depending only on $n$ and $M_0$ such that
\[\frac{|\n F^2|^2}{F^2}\leq 4\frac{C_S}{\log(D_S+2nt)}\leq 4\frac{C_S}{\log(\widetilde{D}_S+F^2)}\]  
or equivalently
\[\frac{S^2}{F^2}\leq 1+\frac{C_S}{\log(D_S+2nt)}\leq 1+\frac{C_S}{\log(\widetilde{D}_S+F^2)}\]
\end{prop}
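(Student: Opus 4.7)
The plan is to apply the Weak Maximum Principle (Theorem \ref{WMP}) to the log-weighted quantity $f = J\log(C+2nt)$, where $J = (S^2-F^2)/F^2$ and $C$ is a suitably large constant depending on $M_0$. Since $J \geq 0$ and is scaling invariant -- both zero on a homothetic solution $\mb{G}_k$ -- this is the natural quantity to control if one expects convergence to such a solution.

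First I would compute $\ho J$ using the product/quotient rule on $J = S^2/F^2 - 1$, feeding in the evolution equations from Lemmas \ref{evolS} and \ref{evolF2}. This yields a raw expression with curvature terms $4HS - 2S^2|A|^2$, gradient terms $-2|\nabla S|^2 + \frac{2}{F^2}\langle \nabla F^2, \nabla(S^2-F^2)\rangle$, and an algebraic $-2n - 2nJ - 8J^2$. Next I would clean this up in three steps: $|A|^2 \geq H^2/n^2$ bounds the curvature part by $2n^2$; Cauchy--Schwarz and Young applied to the cross term, using the identity $|\nabla F^2|^2/F^2 = 4J$, let it absorb against $|\nabla S|^2$ leaving a remainder of at most $4J^2$; this remainder cancels against the $-8J^2$. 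The result is the clean inequality
\[\ho J \leq \frac{2n}{F^2}\bigl[(n-1) - J\bigr].\]
The boundary check is direct: $\langle \nabla J, \mu\rangle = \frac{2S}{F^2}\langle \nabla S, \mu\rangle$ by Lemma \ref{bdryF2}, and Lemma \ref{bdryS} combined with convexity of $\Sigma$ then forces $\langle \nabla J, \mu\rangle = -2\tfrac{S^2}{F^2}A^\Sigma(\nu,\nu) \leq 0$.

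Multiplying through by $L(t) = \log(C+2nt)$ and using $F^2 \leq C_2 + 2nt \leq C+2nt$ from (\ref{F2estimate}) (valid once $C \geq C_2$) gives the weighted inequality $\ho(JL) \leq \frac{2nL}{F^2}\bigl[(n-1) - (1-1/L)J\bigr]$, and the boundary derivative $\langle \nabla(JL), \mu\rangle = L\langle \nabla J, \mu\rangle$ inherits the correct sign. Choosing $C > e^2$ keeps $1-1/L \geq 1/2$ uniformly in $t$, so the coefficient of $J$ stays bounded away from zero; applying Theorem \ref{WMP} then bounds $JL$ by a constant $C_S$ depending only on $n$ and $M_0$, and the equivalent form in terms of $F^2$ follows from (\ref{F2estimate}). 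The main obstacle is the bookkeeping in Step~2: the cross-term $\frac{2}{F^2}\langle\nabla F^2, \nabla(S^2-F^2)\rangle$ is of borderline size, and closing the argument depends on exactly the scaling-invariant combination that defines $J$, so that the $-8J^2$ coming from the evolution of $F^2$ cancels the $+4J^2$ left over from Cauchy--Schwarz with no uncontrolled remainder.
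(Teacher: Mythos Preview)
Your proposal is essentially the paper's own argument, step for step: the same quantity $J=(S^2-F^2)/F^2$, the same evolution inequality obtained via $|A|^2\geq H^2/n^2$ on the curvature terms and Cauchy--Schwarz/Young on the gradient cross term, the same boundary sign from Lemmas~\ref{bdryF2} and~\ref{bdryS}, and the same log-weighting $J\log(C+2nt)$ with $C>e^2$. The only cosmetic slip is in your bookkeeping of the Cauchy--Schwarz remainder --- the gradient block carries an overall factor of $2$, so the leftover is $+8J^2$ (not $+4J^2$) and cancels the $-8J^2$ exactly --- but this does not affect the final inequality or the conclusion.
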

 Now the estimate equation (\ref{HSineq}) implies the following:
\begin{cor}
There exist constants $C^H_1$ and $C^H_2>0$ such that
\label{betterH}
 \[ C^H_1 \leq HF \leq C^H_2 \]
Where $C^H_1$ is positive if $M_0$ is initially mean convex.
\end{cor}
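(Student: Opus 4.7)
The claim is a direct synthesis of the two-sided bound (\ref{HSineq}) on $\frac{H}{S}F^2$ with the improved pinching of $S^2/F^2$ in Proposition \ref{betterS2}. The plan is to write
\[
HF = \frac{HF^2}{S}\cdot\frac{S}{F}
\]
and control each factor separately using estimates already in hand.

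First I would establish that $S/F$ is pinched between two positive constants. The tangential decomposition $\mbf{F}^\top = \mbf{F} + S\nu$ (using $\ip{\nu}{\nu}=-1$) yields $|\mbf{F}^\top|^2 = S^2 - F^2 \geq 0$, so $S^2 \geq F^2 > 0$ unconditionally, as already used in the computation preceding Proposition \ref{betterS2}. Combined with the upper bound from Proposition \ref{betterS2} (evaluated at the worst-case time $t=0$), this gives a constant $K = 1 + C_S/\log(D_S) \geq 1$, depending only on $n$ and $M_0$, with $1 \leq S^2/F^2 \leq K$. Since $|S|\geq F>0$, the scalar $S$ can never vanish along the flow, so it keeps the constant sign it has initially; in the convention for which $S>0$ on the reference solutions $\mb{G}_k$ (where $S = F$), initial spacelikeness and the compatibility with $\Sigma$ give $S>0$ on $M_0$, hence $S>0$ for all $t\geq 0$. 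Therefore $1 \leq S/F \leq \sqrt{K}$.

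The upper bound on $HF$ is then immediate from (\ref{HSineq}):
\[
HF \;=\; \frac{HF^2}{S}\cdot\frac{S}{F} \;\leq\; \widehat{C}_4 \sqrt{K} \;=:\; C^H_2 > 0.
\]
For the lower bound in the mean-convex case, Section 6 shows $\widehat{C}_3>0$ and that $H>0$ is preserved; combined with $S/F \geq 1$, this gives $HF \geq \widehat{C}_3 \cdot 1 =: C^H_1 > 0$. If initial mean convexity is dropped, the observation after (\ref{HSineq}) gives $\widehat{C}_3\geq -n$, and the same multiplication yields the (possibly non-positive) lower bound $HF \geq \widehat{C}_3\sqrt{K}$, which still serves as $C^H_1$.

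There is no real obstacle here beyond careful sign bookkeeping; the corollary is a packaging of previously proved estimates. The conceptual point worth flagging is that the earlier pointwise bound $H^2 \leq C_5$ from Section 6 alone would have given only $HF \leq \sqrt{C_5}\,F$, which blows up; the improvement comes from the asymptotic $|\n F^2|^2/F^2 \to 0$ in Proposition \ref{betterS2}, which forces $S/F$ to behave as on the homothetic solution and thereby extracts the correct scaling $H \sim 1/F$.
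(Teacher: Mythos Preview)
Your proposal is correct and follows exactly the route the paper intends: the corollary is stated immediately after Proposition~\ref{betterS2} with only the phrase ``the estimate equation (\ref{HSineq}) implies the following'', and your argument---writing $HF = (HF^2/S)\cdot(S/F)$ and bounding the two factors via (\ref{HSineq}) and Proposition~\ref{betterS2} respectively---is precisely how that implication unfolds. Your additional care with the sign of $S$ and the lower bound $S/F\geq 1$ (from $S^2-F^2=|\mbf{F}^\top|^2\geq 0$) makes explicit what the paper leaves to the reader.
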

\begin{remark}
 If we suppose $M_0$ is mean convex, we may use the above to calculate the \emph{average} of $H$ asymptotically in time. From the proof of Lemma \ref{evolF2} we have that $\Delta F^2 = 2HS - 2n$ and therefore we see using Lemma \ref{bdryF2} and Divergence Theorem that
\[\int_M HS d\mu=n \int_M  d\mu \ \ .\]
Therefore since $H>0$  we may estimate
\[n\leq \frac{\int_M HF d\mu}{\int_M d\mu} \leq n\sqrt{1+\frac{C_S}{\log(D_S+2nt)}}\]
which asymptotes to exactly what we would expect on our special solution.
\end{remark}

\section{Interior curvature estimates}

We obtain some interior estimates on $|A|^2$ and its derivatives. Note that on our homothetic solution we get that $|A|^2=\frac{n}{F^2}$ and we search for estimates of a similar order. 

To construct a cutoff function first suppose $\overline K : \mbb{R}^{n+1}_1 \rightarrow \mbb{R}$ and define $K: M^n \times [0,T) \rightarrow \mbb R $ by $K(x,t)= \overline K (\mb F(x,t))$.

\begin{lemma}
 \[\ho K = - \overline \nabla_\nu \overline \nabla_\nu \overline K \big|_{\mb F} - \overline \Delta \: \overline K\big|_{\mb F} \]
\end{lemma}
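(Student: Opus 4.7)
The plan is a direct computation splitting the ordinary chain rule for $\tfrac{dK}{dt}$ from the Gauss-type decomposition of $\Delta K$, with careful attention to sign conventions coming from $\langle \nu,\nu\rangle = -1$.

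First I would compute the time derivative. Since $K(x,t) = \overline{K}(\mathbf{F}(x,t))$ and $\mathbf{F}$ satisfies $\tfrac{d\mathbf{F}}{dt} = H\nu$, the chain rule gives $\tfrac{dK}{dt} = \overline\nabla_{H\nu}\overline K = H\,\overline\nabla_\nu \overline K$. No subtleties here beyond using the MCF equation.

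Next I would handle $\Delta K$. Pick a local frame $\{e_1,\dots,e_n\}$ on $M_t$ orthonormal in the induced Riemannian metric and extended so that $\nabla_{e_i}e_j = 0$ at the chosen point. The Gauss formula in Minkowski space reads $\overline\nabla_V W = \nabla_V W + A(V,W)\,\nu$ for tangent $V,W$, where the sign is consistent with $\langle \nu,\nu\rangle = -1$ and $A(V,W) = -\langle \overline\nabla_V W, \nu\rangle$. Using the definition of the ambient Hessian, $(\overline\nabla^2 \overline K)(V,W) = V(W\overline K) - (\overline\nabla_V W)\overline K$, and comparing with the intrinsic Hessian at the point, I get
\[
\nabla^2 K(e_i,e_i) = (\overline\nabla^2\overline K)(e_i,e_i) + A(e_i,e_i)\,\overline\nabla_\nu \overline K,
\]
and summing over $i$ yields $\Delta K = \sum_i (\overline\nabla^2 \overline K)(e_i,e_i) + H\,\overline\nabla_\nu \overline K$.

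Finally I relate $\sum_i (\overline\nabla^2 \overline K)(e_i,e_i)$ to the ambient Laplacian. In signature $(+,\dots,+,-)$ with $\{e_1,\dots,e_n,\nu\}$ a pseudo-orthonormal basis,
\[
\overline\Delta\,\overline K = \sum_i (\overline\nabla^2 \overline K)(e_i,e_i) - (\overline\nabla^2 \overline K)(\nu,\nu),
\]
the minus sign arising precisely because $\nu$ is timelike. Substituting into the expression for $\Delta K$ gives
\[
\Delta K = \overline\Delta\,\overline K + (\overline\nabla^2 \overline K)(\nu,\nu) + H\,\overline\nabla_\nu \overline K.
\]
Subtracting this from $\tfrac{dK}{dt} = H\,\overline\nabla_\nu \overline K$ the mean curvature terms cancel and one lands on the claimed identity $\ho K = -\overline\nabla_\nu\overline\nabla_\nu \overline K|_\mathbf{F} - \overline\Delta\,\overline K|_\mathbf{F}$.

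This is really a bookkeeping exercise; the only place one can slip is the sign in the Gauss formula and in the trace identity for $\overline\Delta$, both of which flip relative to the Riemannian case because the unit normal is timelike. Once those are fixed the cancellation is automatic, and there is no genuine analytical obstacle.
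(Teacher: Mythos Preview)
Your proof is correct and follows essentially the same route as the paper: chain rule for $\tfrac{dK}{dt}$, the Gauss/Weingarten relation to express $\Delta K$ as the tangential trace of the ambient Hessian plus $H\,\overline\nabla_\nu \overline K$, and then the timelike-normal sign in the trace identity $\overline\Delta\,\overline K = \sum_i (\overline\nabla^2\overline K)(e_i,e_i) - (\overline\nabla^2\overline K)(\nu,\nu)$ to finish. The paper's argument is simply a terser version of exactly these steps.
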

\begin{proof}
 First
\[ \frac{d}{dt} K = H \overline \nabla_\nu \overline K \]
We also calculate
\begin{flalign*}
 \Delta K = H \overline \nabla_\nu \overline K + g^{ij}\overline \nabla_\pard{ \mb F}{x^i}\overline \nabla_\pard{ \mb F}{x^j} \overline K
\end{flalign*}
using the Weingarten relations. But now by considering locally in a suitable orthonormal coordinate system and noting the sign of $\nu$ we see
\[ \Delta K =  H \overline \nabla_\nu \overline K + \overline \Delta \: \overline K + \overline \nabla_\nu \overline \nabla_\nu \overline K\]
which gives the Lemma.
\end{proof}
We now stipulate an additional condition on $\overline K$, namely that $\overline \nabla_{\mb F} \overline K=0$. That is, the cutoff function is defined on a hyperbolic plane and remains constant on rays from the origin. We define $Y_\lambda=\left\{ \mb{x} |\ip{\mb{x}}{ \mb{x}}=-\lambda^2, x_{n+1}>0 \right\}$ that is a spacelike embedding of the hyperbolic plane of ``radius'' $\lambda$.
\begin{cor}
\label{evolK}
 Under the condition $\overline \nabla_{\mb F} \overline K=0$ we have at $\mb{p}\in M$
\begin{flalign*}
\ho K &\leq |{\nabla^{Y_F}}^2 \overline K |^{Y_F}(\mb{p}) \left(\frac{S^2}{F^2} - 1\right) - \Delta^{Y_F} \overline K (\mb{p})\\
&= |{\nabla^{Y_1}}^2 \overline K  |^{Y_1}\left(\frac{\mb{p}}{F}\right) \frac{\frac{S^2}{F^2} - 1}{F^2} - \frac{1}{F^2} \Delta^{Y_1} \overline K\left(\frac{\mb{p}}{F}\right)\\
&\leq \frac{\widetilde{C}_K}{F^2}
\end{flalign*}
\end{cor}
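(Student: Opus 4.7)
The plan is to apply the preceding Lemma to express $\ho K$ at $\mbf p=\mbf F(x,t)$ as $-\overline\nabla^2\overline K(\nu,\nu)-\overline\Delta\overline K$, then use that $\overline K$ is constant on rays from the origin to trade every ambient Minkowski derivative for intrinsic data on the hyperbolic plane $Y_F$ through $\mbf p$, and finally rescale from $Y_F$ to $Y_1$ to arrive at the $\widetilde{C}_K/F^2$ bound.

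At $\mbf p$ I would split $T_{\mbf p}\mbb R^{n+1}_1 = T_{\mbf p}Y_F \oplus \langle \mbf p/F\rangle$, using that $\mbf p/F$ is a unit timelike vector normal to $Y_F$, and decompose $\nu = \nu^\top - (S/F)(\mbf p/F)$; from $\ip{\nu}{\nu}=-1$ one reads off $|\nu^\top|^2 = S^2/F^2 - 1$. Introduce the radial field $\mbf n(\mbf x):=\mbf x/\sqrt{-\ip{\mbf x}{\mbf x}}$, which agrees with $\mbf p/F$ at $\mbf p$. The hypothesis $\overline\nabla_{\mbf F}\overline K = 0$ is equivalent to $\mbf n\overline K \equiv 0$ on the future timelike cone, and a direct computation in Minkowski coordinates gives $\overline\nabla_{\mbf n}\mbf n = 0$ and $\overline\nabla_{\mbf v}\mbf n = \mbf v/F$ for $\mbf v \in T_{\mbf p}Y_F$. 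These facts yield three key Hessian identities at $\mbf p$: first, $\overline\nabla^2\overline K(\mbf n,\mbf n) = 0$; second, via the Gauss formula for $Y_F \hookrightarrow \mbb R^{n+1}_1$ (whose vector second fundamental form points along $\mbf n$), $\overline\nabla^2\overline K(\mbf v,\mbf w) = (\nabla^{Y_F})^2\overline K(\mbf v,\mbf w)$ for $\mbf v,\mbf w \in T_{\mbf p}Y_F$; and third, the cross term $\overline\nabla^2\overline K(\mbf v,\mbf n) = -F^{-1}\,\mbf v(\overline K)$. Tracing in an adapted orthonormal frame then gives $\overline\Delta\overline K = \Delta^{Y_F}\overline K$, while expanding $\overline\nabla^2\overline K(\nu,\nu)$ using the decomposition of $\nu$ produces $(\nabla^{Y_F})^2\overline K(\nu^\top,\nu^\top)$ plus a cross term of size $(2S/F^2)\,\nabla^{Y_F}\overline K \cdot \nu^\top$. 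Bounding the quadratic piece by $|(\nabla^{Y_F})^2\overline K|^{Y_F}(\mbf p)(S^2/F^2 - 1)$ delivers the first displayed inequality.

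The middle equality comes from the dilation $\sigma:Y_1 \to Y_F$, $\sigma(\mbf x) = F\mbf x$: radial invariance of $\overline K$ gives $\sigma^*\overline K|_{Y_F} = \overline K|_{Y_1}$, while $\sigma^*g^{Y_F} = F^2 g^{Y_1}$, so since the Levi-Civita connection is invariant under constant rescaling of the metric, all intrinsic derivatives at $\mbf p \in Y_F$ correspond to those at $\mbf p/F \in Y_1$ up to the explicit $F^{-2}$ factors shown. The final estimate $\ho K \leq \widetilde{C}_K/F^2$ then follows since $\overline K|_{Y_1}$ is a fixed cutoff with bounded $C^2$ norm on the relevant compact set of $Y_1$ and $S^2/F^2 - 1$ is uniformly controlled by Proposition \ref{betterS2}. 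I expect the main subtle point to be the cross term $\overline\nabla^2\overline K(\nu^\top,\mbf n)$, which is not literally visible in the first displayed line: it carries an extra factor of $F^{-1}$ and $|\nu^\top|$ is uniformly bounded, so Cauchy--Schwarz combined with the $C^1$ bound on $\overline K|_{Y_1}$ absorbs it comfortably into the $\widetilde{C}_K/F^2$ of the last line.
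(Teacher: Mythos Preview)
Your approach is essentially the same as the paper's: decompose $\nu$ along $T_{\mbf p}Y_F\oplus\langle\mbf p/F\rangle$, use the radial invariance $\overline\nabla_{\mbf F}\overline K=0$ to reduce both ambient terms to intrinsic $Y_F$--data, rescale from $Y_F$ to $Y_1$, and then invoke the $C^2$ bound on $\overline K|_{Y_1}$ together with Proposition~\ref{betterS2}.

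The one genuine difference is that you are \emph{more} careful than the paper about the mixed Hessian term. The paper's proof writes
\[
|\overline\nabla_\nu\overline\nabla_\nu\overline K|
=|\overline\nabla_{\nu+\frac{S}{F^2}\mbf F}\overline\nabla_{\nu+\frac{S}{F^2}\mbf F}\overline K|,
\]
i.e.\ it implicitly treats $\overline\nabla^2\overline K(\nu^\top,\mbf F)$ as zero. As you observe, this cross term is actually $-\nu^\top(\overline K)$ (equivalently $-F^{-1}$ times $\nabla^{Y_F}\overline K\cdot\nu^\top$), so the first displayed line of the Corollary is only true up to that correction. Your resolution---bound it via Cauchy--Schwarz using $|\nu^\top|^2=S^2/F^2-1$ and the $C^1$ bound on $\overline K|_{Y_1}$, and absorb it into $\widetilde C_K/F^2$---is exactly right and is what makes the final inequality valid.
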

\begin{proof}
 Since $Y_F$ is perpendicular to $\mb F$ and $\overline \Delta \: \overline K$ has no contibution from the $\mb F$ direction we immediately have $\overline \Delta \: \overline K = \Delta^{Y_F} \overline K$. Similarly we have  
\begin{flalign*}
|\overline \nabla_\nu \overline \nabla_\nu \overline K \big|_{\mb F}| &= |\overline \nabla_{\nu- \frac{\ip{\nu}{ \mb F}}{F^2}\mb F} \overline \nabla_{\nu- \frac{\ip{\nu}{ \mb F}}{F^2}\mb F} \overline K \big|_{\mb F}|\\
&= |\overline \nabla^{Y_F}_{\nu+ \frac{S}{F^2}\mb F} \overline \nabla^{Y_F}_{\nu+ \frac{S}{F^2}\mb F} \overline K \big|_{\mb F}|\\
&\leq |{\nabla^{Y_F}}^2 \overline K|^{Y_F}\left(\frac{S^2}{F^2}-1\right)
\end{flalign*}
by Cauchy--Schwarz, giving the first inequality. 

The second is using the scaling of $K$ on $Y_F$ -- this allows us to estimate over $Y_1$ rather that $Y_F$ where $F$ may vary from point to point. This inequality is simply from properties of dilations and the constancy of $\overline{K}$ on rays from $\mb 0$: Keeping a function constant but dilating the manifold by $\lambda$ while keeping $\overline K$ the same we get that $g_{ij}$ becomes $\lambda^2 g_{ij}$, $g^{ij}$ becomes $\lambda^{-2} g^{ij}$, $\Gamma^k_{ij}$ remains $\Gamma^k_{ij}$ and so on. This gives the stated formula.

The third of these comes from estimating second derivatives of $\overline K$ on $Y_1$ and Proposition \ref{betterS2}.
\end{proof}

The final two conditions we wish $\overline{K}$ to have in addition to that of the above Corollary are:
\begin{itemize}
 \item $0\leq \overline{K} \leq C$ where $\overline{K}$ restricted to $Y_1$ has compact support and
\item $\frac{|\nabla^{Y_1} K|^2}{K}\leq \widehat{C}_K$ for some $\widehat{C}_K>0$. 
\end{itemize}

The question of whether such a function exists is easily solved, for example if we take the Poincar\'e model of hyperbolic space (which is  isometric to $Y_1$) we could take $K$ to be the radial function $K(r)=(1-Er^2)_+^3$. We then calculate in this metric
\[\frac{|\nabla^\text{Poin} K |^2}{K}=\frac{\left(\frac{dK}{dr}\right)^2 (1-r^2)^2}{4K} =  9E^2r^2(1-Er^2)_+(1-Er^2)\]
which is clearly bounded (depending on $E$) on the unit ball. Furthermore this function is zero outside a hyperbolic ball and bounded by $1$ and by changing $E$ we may choose the radius of the hyperbolic ball which is $\text{supp}(K)$.

For $K$ satisfying the above we know
\begin{flalign*}
|\nabla K|^2&= |\overline \n \, \overline{K}|^2+\ip{ \nabla^{Y_F} K}{ \nu-\frac{S}{F^2}\mb{F}}^2\\
&\leq \frac{\widetilde{C}_S |\nabla^{Y_1} K|^2}{F^2}
\end{flalign*}
 for $\widetilde{C}_S=1+\frac{C_S}{\log{D_S}}>0$ where we used the Cauchy--Schwarz inequality on the hyperbolic plane and Proposition \ref{betterS2}.  Therefore for such a function we have that at a maximum of $fK$ -- a point where $f \nabla K + K \nabla f=0$ then
\begin{flalign}
 \ho fK &\leq \frac{f\widetilde{C}_K}{F^2} +\widetilde{C}_Sf\frac{|\n^{Y_1} K|^2}{F^2 K} + K \ho f\nonumber\\
&\leq\frac{f C_K}{F^2} + K \ho f \label{evolcutoff}
\end{flalign}
where $C_K>0$.
\begin{lemma}
\label{A2interior}
 Let $L\subset \mbb R^{n+1}_1$ be such that if $\mb x \in L$ then $\mb \lambda x \in L \ \ \forall \lambda \in \mbb R$, and so that $Y_1 \cap L$ is a compact set of minimum hyperbolic distance $d>0$ from $\Sigma$ with a smooth boundary. Then on $M_t \cap L$
\[ |A|^2 \leq \frac{C_A}{F^2}\]
Where the constant depends on $d$, the second derivatives of the boundary of  $Y_1 \cap L$, $n$ and $M_0$.
\end{lemma}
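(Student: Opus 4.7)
The plan is to apply Theorem \ref{WMP} to a localised version of the scale-invariant quantity $F^2|A|^2$, which equals $n$ on the hyperbolic soliton $\mbf{G}_k$. First I would construct a radial cutoff $\overline K$ satisfying the three bulleted properties listed above, arranged additionally so that $\overline K \geq k_0 > 0$ on $L \cap Y_1$ while $\mathrm{supp}(\overline K|_{Y_1})$ sits at hyperbolic distance at least $d/2$ from $\Sigma \cap Y_1$; the Poincar\'e-ball example above easily accommodates this after a translation and a choice of $E$. Because $\mbf F(\partial M) \subset \Sigma$ and $\overline K$ is constant on rays through the origin, the induced $K$ then vanishes identically in a neighbourhood of $\partial M_t$ inside $M_t$, so the boundary hypothesis of Theorem \ref{WMP} is automatic.

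The candidate test function is $Q := F^2\,|A|^2\,K$. Combining Lemma \ref{evolF2}, Proposition \ref{evolcurv} and Corollary \ref{evolK} with the gradient bounds $|\nabla K|^2/K \leq \widetilde{C}_S\widehat{C}_K/F^2$ and $|\nabla F^2|^2/F^2 \leq 4C_S/\log D_S$ (the latter from Proposition \ref{betterS2}), I would expand $\ho(F^2 \cdot |A|^2 \cdot K)$ as three non-crossed heat terms together with the three mixed Laplacian cross terms. At an interior spatial maximum of $Q$ the logarithmic gradient identity
\[
\frac{\nabla F^2}{F^2} + \frac{\nabla |A|^2}{|A|^2} + \frac{\nabla K}{K} = 0
\]
lets me eliminate $\nabla |A|^2$ from each cross term, and a single Young's inequality then handles the remaining $\ip{\nabla F^2}{\nabla K}$ contribution. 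Discarding the favourable $-2F^2K|\nabla A|^2$ piece of $\ho|A|^2$, the estimate I expect is of the shape
\[
\ho Q \leq -2F^2K\,|A|^4 + \Lambda\,|A|^2,
\]
where $\Lambda$ depends only on $n$, $M_0$, $\widetilde C_K$, $\widehat C_K$, $C_S$ and $\widetilde C_S$.

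At any interior spatial maximum the right-hand side is non-positive as soon as $Q = F^2K|A|^2 \geq \Lambda/2$. Combined with $Q\equiv 0$ in a neighbourhood of $\partial M_t$ and the initial bound $Q(\cdot,0) \leq \|K\|_\infty \sup_{M_0} F^2|A|^2$, Theorem \ref{WMP} then yields a uniform bound $Q \leq \overline C$. Restricting to $L$, where $K \geq k_0$ by construction, gives $|A|^2 \leq \overline C/(k_0 F^2)$, which is the claim with $C_A = \overline C/k_0$.

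The main technical obstacle I foresee is the bookkeeping at the maximum: I must verify that every term produced by the logarithmic identity and the Young's inequality is genuinely of order $|A|^2$, rather than $|A|^2$ multiplied by a factor growing in $t$, so that the quartic $-2F^2K|A|^4$ strictly dominates once $Q$ is large. This is precisely where the scaling-invariant asymptotic estimate of Proposition \ref{betterS2} becomes indispensable — without a uniform bound on $|\nabla F^2|^2/F^2$ the product $F^2 K|A|^2$ would not close up and a more elaborate test function would be forced.
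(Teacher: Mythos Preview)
Your approach is correct and follows essentially the same strategy as the paper: localise a scale-invariant version of $|A|^2$ with the ray-constant cutoff $K$ and let the quartic term $-2|A|^4$ dominate at a spatial maximum. The only difference is that the paper takes $f_0=(D+2nt)\,|A|^2$ in place of your $F^2|A|^2$; since $\nabla(D+2nt)=0$, no $\nabla F^2$ cross terms ever appear and the bookkeeping you flag as the ``main technical obstacle'' simply never arises --- everything reduces to the pre-packaged inequality (\ref{evolcutoff}). Your version works just as well because $F^2$ and $D+2nt$ are uniformly comparable by (\ref{F2estimate}), at the cost of one extra appeal to Proposition~\ref{betterS2} to control the additional $\nabla F^2$ contributions.
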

\begin{proof}
 Since we have suitable cutoff functions (if necessary just using the radial one with sufficiently large $E$) then the proof comes down to evolution equations. We calculate that for $f_0=|A|^2(D+2nt)$:
\begin{flalign*} 
\ho f_0 &= 2n |A|^2 - 2(D+2nt)(|A|^4 + |\nabla A|^2)\\
&\leq  \frac{2nf_0}{D+2nt} - (D+2nt)|A|^4
\end{flalign*}
Now applying equation (\ref{evolcutoff}) we have by choosing $D$ large enough
\begin{flalign*}
 \ho Kf_0 \leq \frac{f_0}{F^2}\left[C_K +2nK  - B f_0K \right]
\end{flalign*}
for some $B>0$ depending on $C_1$ and $C_2$ (see equation (\ref{F2estimate})). Therefore since $K=0$ at the boundary we have the Lemma.
\end{proof}
\begin{lemma}
\label{nablaA2interior}
 For $L$ as in the previous Lemma we have that all \mbox{$m\geq1$} there exists a constant $C_{A,m}$ depending on $m, n, L, d, M_0$ and the second derivatives of the boundary of $Y_1\cap L$ such that
\[|\nabla^m A|^2 \leq \frac{C_{A,m}}{F^2}\]
\end{lemma}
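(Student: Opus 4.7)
The plan is to induct on $m$. The base case $m=0$ is exactly Lemma \ref{A2interior}. For the inductive step, assume the bound $|\nabla^k A|^2 \leq C_{A,k}/F^2$ holds on a slightly larger cone-region $L' \supset L$ for all $k < m$. Pick a radial cutoff function $K$ (of the type constructed immediately before Lemma \ref{A2interior}) supported on $L'$ with $K \geq \varepsilon > 0$ on $L$; this replaces the outer cutoff used at the previous stage. The final estimate will be obtained by applying the weak maximum principle to a Bernstein–Shi-type combination of $|\nabla^m A|^2$ and $|\nabla^{m-1} A|^2$ multiplied by $K$.

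First I would estimate the reaction terms in Proposition \ref{evolcurv} for $|\nabla^m A|^2$. The sum $\sum_{i+j+k=m} \nabla^i A * \nabla^j A * \nabla^k A * \nabla^m A$ splits into the cases where one of the indices equals $m$ (forcing the other two to be zero, giving $|A|^2|\nabla^m A|^2 \leq C|\nabla^m A|^2/F^2$ by Lemma \ref{A2interior}) and where all indices are strictly less than $m$, so by induction $|\nabla^i A||\nabla^j A||\nabla^k A| \leq C/F^3$, and Young's inequality gives a contribution $\leq \varepsilon|\nabla^m A|^2/F^2 + C_\varepsilon/F^6$. Combining,
\begin{equation*}
\ho |\nabla^m A|^2 \leq -2|\nabla^{m+1} A|^2 + \frac{C}{F^2}|\nabla^m A|^2 + \frac{C}{F^6}.
\end{equation*}
Similarly, the induction hypothesis applied to $\ho|\nabla^{m-1}A|^2$ yields
\begin{equation*}
\ho|\nabla^{m-1}A|^2 \leq -2|\nabla^m A|^2 + \frac{C}{F^4}.
\end{equation*}

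Next I would form $f = (D+2nt)|\nabla^m A|^2 + A|\nabla^{m-1}A|^2$ for constants $D,A$ to be chosen, and compute, using $\ho F^2 = 2n$ and (\ref{F2estimate}) to control $(D+2nt)/F^2$,
\begin{equation*}
\ho f \leq \Bigl[2n + \tfrac{C(D+2nt)}{F^2} - 2A\Bigr]|\nabla^m A|^2 - 2(D+2nt)|\nabla^{m+1}A|^2 + \frac{C}{F^2}.
\end{equation*}
By first fixing $D$ (to use (\ref{F2estimate}) and the preceding cutoff construction) and then choosing $A$ large enough, the bracketed coefficient becomes $\leq -1$, so
\begin{equation*}
\ho f \leq -|\nabla^m A|^2 + \frac{C}{F^2}.
\end{equation*}
Multiplying by the cutoff $K$ and applying the inequality (\ref{evolcutoff}) together with Corollary \ref{evolK},
\begin{equation*}
\ho(Kf) \leq K\ho f + \frac{C_K f}{F^2} \leq -K|\nabla^m A|^2 + \frac{C}{F^2}\bigl[(D+2nt)|\nabla^m A|^2 + A|\nabla^{m-1}A|^2 + 1\bigr].
\end{equation*}
Since $(D+2nt)/F^2$ and $K$ are both bounded below and above on the support of $K$, and $|\nabla^{m-1}A|^2 \leq C/F^2$, the only delicate term is $\tfrac{C(D+2nt)}{F^2}|\nabla^m A|^2$, which is absorbed into $-K|\nabla^m A|^2$ by enlarging $A$ once more (which only strengthens the negative coefficient above). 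This leaves $\ho(Kf) \leq C_0/F^2$ at any interior maximum. Since $Kf$ vanishes on the spatial boundary of its support (which lies strictly inside the flowing manifold thanks to the cone-invariance and hyperbolic-distance condition on $L$), the weak maximum principle Theorem \ref{WMP} yields $Kf \leq C$, hence $(D+2nt)|\nabla^m A|^2 \leq C$ on $L$, i.e.\ $|\nabla^m A|^2 \leq C_{A,m}/F^2$.

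The main obstacle is the bookkeeping of absorption: one must enlarge $A$ after rather than before multiplying by $K$ so as to dominate both the bare reaction term and the cutoff-gradient contribution $f C_K/F^2$; and one must choose the nested family of cutoff regions $L \subset L_m \subset L_{m-1} \subset \cdots$ so that at each induction step the cutoff has support strictly inside the region where the previous estimate holds. Everything else is a routine adaptation of the Ecker–Huisken/Shi argument to the present Minkowski setting.
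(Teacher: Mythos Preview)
Your inductive scheme, the choice of cutoff, and the reduction of the reaction terms via the inductive hypothesis all match the paper's strategy. The divergence is in the Bernstein combination: the paper uses the \emph{multiplicative} quantity
\[
f_m=(B+2nt)\,|\nabla^m A|^2\bigl((B+2nt)\,|\nabla^{m-1}A|^2+E\bigr),
\]
whereas you use the additive $f=(D+2nt)|\nabla^m A|^2+A|\nabla^{m-1}A|^2$. This is not just a cosmetic difference; it is precisely what makes the cutoff step close.

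With your additive $f$, after invoking (\ref{evolcutoff}) at a maximum of $Kf$ the good term is $-PK|\nabla^m A|^2$ (with $P$ as large as you like by enlarging $A$), while the cutoff error $\tfrac{C_K f}{F^2}$ contains the piece $\tfrac{C_K(D+2nt)}{F^2}|\nabla^m A|^2\leq C'|\nabla^m A|^2$ with $C'$ a fixed constant that does \emph{not} carry a factor of $K$. Absorption therefore requires $PK\geq C'$, i.e.\ $K\geq C'/P$; but however large you take $A$ (hence $P$), the cutoff $K$ drops below $C'/P$ on a neighbourhood of $\partial(\mathrm{supp}\,K)$. Since there is no a~priori bound on $|\nabla^m A|^2$ in that neighbourhood, nothing prevents the maximum of $Kf$ from sitting there, and your sentence ``absorbed into $-K|\nabla^m A|^2$ by enlarging $A$ once more'' does not go through. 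The nested regions $L\subset L_m\subset\cdots$ help you invoke the inductive bound on $|\nabla^{m-1}A|^2$, but they do not address this absorption failure.

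The multiplicative combination avoids the problem because the factor $(B+2nt)|\nabla^{m-1}A|^2+E$ contributes, through its evolution, a term $-2(B+2nt)|\nabla^m A|^2$; multiplied by the other factor this yields a \emph{quadratic} negative term $-2(B+2nt)^2|\nabla^m A|^4$. Since the second factor is bounded above by induction, this gives $\ho f_m\leq \tfrac{C f_m}{B+2nt}-C f_m^2$, and after the cutoff one obtains at a maximum of $Kf_m$
\[
\ho(Kf_m)\;\leq\; f_m\Bigl(\tfrac{C}{F^2}-C\,Kf_m\Bigr).
\]
Thus whenever $Kf_m$ exceeds a fixed constant the right-hand side is negative, and the maximum principle bounds $Kf_m$ directly---regardless of how small $K$ is at the maximum point. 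In short, the quadratic good term survives localisation where your linear one does not; replacing your additive $f$ by the product $f_m$ (and then choosing $E$ large to control the cross term $|\nabla^{m+1}A||\nabla^m A|^2|\nabla^{m-1}A|$ via Young's inequality, as the paper does for $m=1$) repairs the argument.
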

\begin{proof}
 The proof is by induction. Writing $J_1=|A|^2(D+2nt) + E<C_A+E$ where $E>0$ is a constant yet to be chosen, we define
\[f_1=(D+2nt)|\nabla A|^2J_1\ \ .\]
Using Proposition \ref{evolcurv}, the Cauchy--Schwarz inequality and the above Lemma, writing $C_n$ for any positive constant depending only on $n$, $K$ and $M_0$ then
\begin{flalign*}
 \ho f_1 &\leq (D+2nt)\bigg[ -2J_1|\n^2A|^2 - 2(D+2nt)|\n A|^4\\
&\qquad\qquad\qquad+ \frac{C_n(E+1)}{D+2nt}|\n A|^2 +8(D+2nt)|\n^2 A||\n A|^2|A|\bigg]\\
&\leq(D+2nt)\bigg[\frac{4|\n A|^4|A|^2(D+2nt)^2}{(|A|^2(D+2nt) + E)^2}- 2(D+2nt)|\n A|^4\\
&\qquad\qquad\qquad\qquad\qquad\qquad\qquad\qquad\qquad+ \frac{C_n(E+1)}{D+2nt}|\n A|^2 \bigg]\\
&\leq(D+2nt)\bigg[\frac{C_n}{E^2}|\n A|^4(D+2nt)- 2(D+2nt)|\n A|^4\\
&\qquad\qquad\qquad\qquad\qquad\qquad\qquad\qquad\qquad+ \frac{C_n(E+1)}{D+2nt}|\n A|^2 \bigg]
\end{flalign*}
We now choose $E$ sufficiently large that the coefficient $\frac{C_n}{E^2}\leq\frac{1}{2}$ and therefore
\begin{flalign*}
 \ho f_1 &\leq (D+2nt)\left[ - \frac{3}{2}(D+2nt)|\n A|^4+ \frac{C_n}{D+2nt}|\n A|^2 \right]\\
&\leq \frac{C_n f_1}{D+2nt}- C_n f_1^2
\end{flalign*}
where here again we used the bound on $J_1$. Substituting into equation (\ref{evolcutoff}) we see:
\[\ho f_1K\leq f_1\left(\frac{C_n}{F^2} - C_n f_1\right)\]

In fact the same argument holds for
\[f_m=(B+2nt)|\nabla^m A|^2(|\nabla^{m-1} A|^2(B+2nt)+D)\]
by induction. This follows from:
\begin{itemize}
\item The fact that we don't use the extra negative term, $-|A|^4$, from the evolution equation of $|A|^2$.
\item The extra positive term from the evolution equation of $|\nabla^{m-1} A|^2$ is
\[\underset{i+j+k=m-1}\sum \nabla^i A *\nabla^j A*\nabla^k A*\nabla^{m-1} A \leq \frac{C}{(B+2nt)^2}\]
by the inductive hypothesis, which is of the right order for the above equations to still hold. 
\item The only other remaining difference is an extra term in the estimate on the evolution equation of $|\n^m A|^2$ which becomes a positive term $C_n \frac{\sqrt{f_m}}{D+2nt}$ in the evolution of $f_m$. This is easily delt with using $-\epsilon f_m^2$. 
\end{itemize}
Hence an identical argument to the above gives the Lemma.
\end{proof}

\section{Convergence and renormalisation}

In this section our purpose is to define the shape of the solution as $t \rightarrow \infty$. For this some notion of blowdown will be needed. 
\begin{defses}
 If $\mb{F}: M^n \times [0, \infty) \rightarrow \mbb{R}^{n+1}_1$ satisfies equation (\ref{pmcf}) then let $\widehat{\mb{F}} = \psi (t) \mb{F} $ where $\psi (t)$ is some factor such that the area of $\widehat{\mb{F}}(M)$ is 1. For any geometric quantity $f$ on $\mb F$ we will denote the same quantity $\widehat{f}$ on $ \widehat{ \mb F}. $

For $\mb{G}: M^n \times [0, \infty) \rightarrow \mbb{R}^{n+1}_1$ we will say $\mb{F} \rightarrow \mb{G}$ as $t \rightarrow \infty$  in $C^0$, $C^1$, $\ldots$ if $\widehat{\mb{F}} \rightarrow \widehat{\mb{G}}$ as $t \rightarrow \infty$ in $C^0$, $C^1$, $\ldots$ . 
\end{defses}
\begin{remark}
 It is usual (as in \cite{HuiskenConvex}) to renormalise time aswell. Indeed we may also do so here, by defining $s=\int_0^t \psi(r)^2 dr$. We then obtain
\[\frac{d \widehat{\mb{F}}}{ds}= \frac{d \widehat {\mb{F}}}{dt}\frac{d t}{ds} = \psi^{-2} \left( \psi H \nu - \psi^{-1}\frac{1}{n} \frac{\int_M H^2 d \mu}{\int_M d \mu} \mb{F}\right) = \widehat{H} \nu + \frac{\widehat{\mb{F}}}{n} \int_{\widehat{M}} \widehat{H}^2 d \widehat{\mu}\ \ .\]
In actual fact Lemma \ref{volest} will show that $s\geq C \log(D+2nt)$ and hence we need not make a distinction between $s \rightarrow \infty$ and $t\rightarrow \infty$. 
\end{remark}

It will be useful to have estimates on the quantity $\psi$:

\begin{lemma}
\label{volest}
 There exist constants $C_Y,\widetilde{C}_Y>0$ such that for time $t$ sufficiently large then
\[ \frac{C_Y}{F} \leq \psi (t) \leq C_Y\frac{\sqrt{1+\frac{C_S}{\log(\widetilde{D}_S +F^2)}}}{F}\leq \frac{\widetilde{C}_Y}{F}\ \ .\]
\end{lemma}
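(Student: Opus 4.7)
Since $\widehat{\mathbf F} = \psi \mathbf F$ is a Minkowski dilation, $n$-dimensional areas scale by $\psi^n$, so the normalisation $|\widehat M_t|=1$ forces $\psi(t) = |M_t|^{-1/n}$, and the claim reduces to showing that $|M_t|$ is comparable to $F^n$ up to the stated slowly-vanishing correction factor.

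I would parametrise $M_t$ as a radial graph over the fixed compact region $\Omega := Y_1 \cap \mathrm{int}(\Sigma)$ of the hyperbolic hyperplane. Spacelikeness of $M_t$ together with timelikeness of the rays generating $\Sigma$ ensures that each ray from $\mathbf 0$ through a point $p \in \Omega$ meets $M_t$ exactly once, so that $\mathbf F(p) = u(p) p$ with $u(p) = F$ at that intersection. Differentiating, and using $\langle p,p\rangle=-1$, $\langle p,\partial_i p\rangle=0$ and $\langle\partial_i p,\partial_j p\rangle = g^{Y_1}_{ij}$, one finds the induced metric $g_{M_t} = u^2 g^{Y_1} - du\otimes du$, and the matrix determinant lemma then gives
\[
d\mu_{M_t} = u^n \sqrt{1 - a}\; d\mu_{Y_1}, \qquad a := |\nabla^{Y_1} u|^2/u^2.
\]

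To bound the tilt factor $a$, I would combine the Sherman--Morrison identity $|\nabla^{M_t} F|^2 = a/(1-a)$ with the explicit expression $|\nabla^{M_t} F|^2 = (S^2 - F^2)/F^2$ coming from $\overline\nabla F^2 = -2\mathbf F$ projected onto $TM_t$; this yields $a \leq (S^2-F^2)/F^2 \leq C_S/\log(\widetilde D_S + F^2)$ by Proposition \ref{betterS2}. Integrating the area formula against the pointwise pinching $C_1+2nt \leq F^2 \leq C_2+2nt$ of (\ref{F2estimate}) then sandwiches $|M_t|$ between $\sqrt{1-\epsilon(t)}\,(C_1+2nt)^{n/2}|\Omega|_{Y_1}$ and $(C_2+2nt)^{n/2}|\Omega|_{Y_1}$, where $\epsilon(t) := C_S/\log(\widetilde D_S + C_1+2nt)$. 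Taking $(-1/n)$-th powers and setting $C_Y := |\Omega|_{Y_1}^{-1/n}$ delivers both bounds in the lemma. The main bookkeeping issue is that two separate correction factors appear here -- the tilt contribution $(1-\epsilon)^{-1/(2n)}$ and the pinching ratio $\sqrt{(C_2+2nt)/(C_1+2nt)} = 1 + O(1/F^2)$ -- and, since $1/F^2$ decays strictly faster than $1/\log F^2$, for $t$ sufficiently large both are absorbed into the single $\sqrt{1 + C_S/\log(\widetilde D_S + F^2)}$ factor stated in the lemma (after possibly enlarging $C_Y$), which is exactly what forces the ``$t$ sufficiently large'' hypothesis.
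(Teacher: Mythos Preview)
Your proposal is correct and follows essentially the same route as the paper: parametrise $M_t$ as a radial graph $u\,\mathbf Y$ over $\mathcal B = Y_1\cap\mathrm{int}(\Sigma)$, compute $d\mu_{M_t}=u^n\sqrt{1-|\nabla^{Y_1}u|^2/u^2}\,d\mu_{Y_1}$ from the determinant of $u^2g^{Y_1}-du\otimes du$, control the tilt factor via Proposition~\ref{betterS2}, and then invert the resulting area bounds. The only cosmetic difference is that the paper rewrites the tilt factor directly as $\sqrt{1/(1+|\nabla F^2|^2/(4F^2))}$ rather than bounding your $a$ separately, and is less explicit than you are about absorbing the $O(1/F^2)$ pinching ratio into the $1/\log$ correction (which is indeed where the ``$t$ sufficiently large'' hypothesis enters).
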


\begin{proof}
 Let $\mb{Y}$ be a parametrisation of $Y_1$. Then any spacelike manifold  contained within the lightcone may be written as $\mb{Z} = u(x) \mb Y (x)$. By standard methods we get the following identities:
\begin{flalign*} 
g^Z_{ij} &= u^2 g^Y_{ij} - D_iu D_ju\\
g_Z^{ij} &= \frac{1}{u^2}\left( g^{jk}_Y +\frac{D_aug_Y^{aj}D_bug_Y^{bk}}{u^2 - |\n^Yu|^2}\right)\\
\det (g_{ij}^Z) &= (u^2)^{n-1}(u^2-|\n^Y u|^2)\det g_{ij}^Y\ \ .
\end{flalign*}
From these we may see that
\[ |\nabla ^Y u |^2 = \frac{u^2|\n u^2|^2}{4u^2 +|\n u^2 |^2}\ \ .\]

Since $u= F$ and we may write the area of $M$ as the integral over the interior of $\Sigma$ intersected with $Y_1$ -- we call this set $\mathcal{B}$. Therefore:
\begin{flalign*}
\int_M d\mu &= \int_\mathcal{B} u^n\sqrt{1-\frac{|\n^Y u|^2}{u^2}} d \mu_{Y_1} \\
&= \int_\mathcal{B} F^n \sqrt{\frac{1}{1+\frac{|\n F^2|^2}{4F^2}}} d \mu_{Y_1}
\end{flalign*}
and so using Proposition \ref{betterS2} and equation (\ref{F2estimate}) then we see for $t$ large enough there exists a $C_\mathcal{B}$ such that
\[C_\mathcal{B}F^n\sqrt{\frac{1}{1+\frac{C_S}{\log(\widetilde{D}_S+F^2)}}} \leq \int_M d\mu \leq C_\mathcal{B} F^n\ \ .\]
Since $\psi = \left(\int_M d\mu \right)^{-\frac{1}{n}}$ we have the Lemma.
\end{proof}

\begin{theorem}
 Any initially spacelike solution of equation (\ref{pmcf}) with a convex cone boundary condition will converge as time tends towards infinity to some $\mb{G}_k$ in the $C^1$ norm. Furthermore, on any interior set uniformly away from the boundary there exists an increasing sequence of $t_i\rightarrow \infty$ such that $\widehat{M}_{t_i}$ converge to the solution on the interior in the $C^\infty$ topology. 
\end{theorem}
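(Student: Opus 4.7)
The plan is to combine the scaling-invariant estimates of the previous sections to produce $C^1$ convergence of $\widehat{M}_t$ to a single hyperbolic section of $\Sigma$, and then to bootstrap this to subsequential $C^\infty_{\mathrm{loc}}$ convergence on the interior via parabolic regularity applied to the graphical equation (\ref{gmac}).

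First I would establish uniform $C^0$ convergence of $\widehat{F}$. By (\ref{F2estimate}), $F^2 - 2nt \in [C_1, C_2]$ pointwise on $M_t$, so the spatial oscillation of $F^2$ is bounded uniformly in time and $F^2/(2nt) \to 1$ uniformly. Combined with Lemma \ref{volest}, which forces $\psi^2 F^2$ to converge, one obtains $\widehat{F}^2 \to \widehat{k}^2$ uniformly on $\widehat{M}_t$ for a constant $\widehat{k} > 0$ depending only on $\mathcal{B}$ and $n$.

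Next I would prove $|D\widehat{u}|\to 0$ uniformly on the compact graph domain $\overline{D}$, which lies strictly inside the open unit ball of $\mathbb{R}^n$ because $\Sigma$ is timelike. From the identity $S = u^2/(v\sqrt{1-|\mathbf{x}|^2})$ and the formula for $v$ in Section 2, a short algebraic manipulation gives $|Du|^2 - (Du\cdot \mathbf{x})^2 = F^2(S^2-F^2)/(S^2(1-|\mathbf{x}|^2))$. Using Cauchy--Schwarz to bound $(Du\cdot\mathbf{x})^2 \leq |\mathbf{x}|^2|Du|^2$ and then Proposition \ref{betterS2}, one deduces $|Du|^2 \leq CF^2/\log(D_S + 2nt)$ on $\overline{D}$. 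Since $\widehat{u} = \psi u$ and $\psi \leq \widetilde{C}_Y/F$ by Lemma \ref{volest}, it follows that $|D\widehat{u}|^2 \leq C/\log(D_S + 2nt) \to 0$. Together with the $C^0$ bound this is $C^1$ convergence of the graph, hence of $\widehat{M}_t$, to the fixed hyperbolic section $Y_{\widehat{k}}\cap\Sigma$. A direct computation shows this limit agrees with $\widehat{\mathbf{G}_k}$ for every $k$ (all homothetic solutions renormalise to the same object), proving the first statement.

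For the subsequential $C^\infty$ claim on any interior cone $L$ uniformly away from $\Sigma$: Lemma \ref{A2interior} together with the rescaling identity $|\widehat{A}|^2 = \psi^{-2}|A|^2$ gives a uniform bound $|\widehat{A}|^2 \leq C$ on $\widehat{M}_t \cap L$. Combined with the uniform $C^1$ graph bounds above, the renormalised form of (\ref{gmac}) is uniformly parabolic on any strict interior subdomain $D' \Subset D\cap L$ with smooth, bounded coefficients. Standard interior regularity for quasilinear parabolic equations, as in \cite{Lieberman}, then yields uniform-in-time $C^{k,\alpha}$ estimates on $\widehat{u}$ over $D'$ for every $k\geq 0$, and Arzel\`a--Ascoli extracts a subsequence $\widehat{M}_{t_i}$ converging in $C^\infty_{\mathrm{loc}}$ on the interior; the previously established $C^1$ limit identifies the sub-limit as $Y_{\widehat{k}}\cap\Sigma = \widehat{\mathbf{G}_k}$.

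The main obstacle is getting $C^1$ convergence \emph{up to the boundary}, where no curvature bound is available and where (\ref{gmac}) degenerates in its boundary behaviour. The argument must therefore rely only on the globally valid, scaling-invariant estimates of Proposition \ref{betterS2} and the maximum-principle bound on $F^2-2nt$, both of which hold all the way to $\partial M$. The subsequential nature of the $C^\infty$ conclusion reflects a complementary weakness: the higher-order curvature bounds $|\nabla^m A|^2\leq C_{A,m}/F^2$ of Lemma \ref{nablaA2interior} do not decay at the scaling-correct rate for a direct renormalised estimate on $\widehat{M}_t$, so the soft compactness argument via the PDE is used instead.
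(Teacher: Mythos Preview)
Your $C^1$ argument is essentially the paper's, transposed into graph coordinates: both rest on Proposition~\ref{betterS2} and the pinching~(\ref{F2estimate}); the paper phrases the decay intrinsically as $|\widehat{\nabla}\widehat{F}^2|^2\to 0$, while you compute the equivalent statement $|D\widehat{u}|\to 0$ via the identity $|Du|^2-(Du\cdot\mathbf{x})^2=F^2(S^2-F^2)/\bigl(S^2(1-|\mathbf{x}|^2)\bigr)$, which is correct. For the identification of the limit, the paper is slightly more explicit, naming $R=\bigl(\int_{\mathcal B}d\mu_{Y_1}\bigr)^{-1/n}$, but the content is the same.

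For the interior $C^\infty$ step the two routes genuinely differ. The paper asserts uniform bounds on every $|\nabla^m\widehat{A}|^2$ directly from Lemma~\ref{nablaA2interior} and then applies Arzel\`a--Ascoli. You instead take only the scale-correct bound $|\widehat{A}|^2\le C$ from Lemma~\ref{A2interior} and bootstrap higher regularity through the renormalised graphical PDE. Your closing observation is sharp and worth stating plainly: under $\mathbf{F}\mapsto\psi\mathbf{F}$ one has $|\nabla^m\widehat{A}|^2=\psi^{-2(m+1)}|\nabla^m A|^2$, so the bound $|\nabla^m A|^2\le C_{A,m}/F^2$ of Lemma~\ref{nablaA2interior} rescales to $|\nabla^m\widehat{A}|^2\lesssim F^{2m}$, which is \emph{not} uniformly bounded for $m\ge 1$. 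Thus the paper's final proof, as written, has a gap at precisely this point, and your PDE-regularity argument avoids it. The paper's approach is salvageable---one can reweight the inductive scheme in Lemma~\ref{nablaA2interior} to obtain the dimensionally correct $|\nabla^m A|^2\le C/F^{2(m+1)}$---but your route is cleaner. The one thing you should make explicit is the renormalised equation in the time variable $s=\int_0^t\psi^2\,dr$ (cf.\ the remark after the blowdown definition) and verify its coefficients depend only on $\widehat{u},D\widehat{u}$, so that your uniform $C^1$ bound genuinely activates interior Schauder theory over unit $s$-intervals.
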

\begin{proof}
Under a dilation by a factor $\lambda$, $\mathcal{D}_\lambda $ we have $\mathcal{D}_\lambda(F^2) = \lambda^2 F^2$, $\mathcal{D}_\lambda S = \lambda S$, and so on. Hence from equation (\ref{F2estimate}), Proposition \ref{betterS2}, Corollary \ref{betterH} and the above estimates on the dilation factor then we get
\[
\begin{array}{rcccl}
 \widehat{C}_1 &\leq& \widehat{F}^2 &\leq& \widehat{C}_2\\
0 &\leq& |\nabla \widehat{F}^2 |^2 &\leq& \frac{\widehat{C}_S}{\log(\widetilde{D}_S +F^2)}\\
\widehat{C}^H_1 &\leq& \widehat{H} &\leq& \widehat{C}^H_2
\end{array}
\]
where $\widehat{C}_1, \widehat{C}_2,\widehat{C}_S,\widehat{C}^H_2>0$ and $\widehat{C}^H_1>0$ if $C^H_1>0$ is and all of these constants depend only on $n$ and $M_0$. The first of these is boundedness of the renormalised hypersurface, the second implies that we have in fact $C^1$ convergence to a hypersurface with $|\nabla F^2|=0$. Therefore these estimates imply $C_1$ convergence of $\widehat{M}$ to $Y_R$, the hyperbolic hyperplane of radius
\[R=\left( \int_\mathcal{B} d\mu_{Y_1} \right)^{-\frac{1}{n}}\]
where $\mathcal{B}$ is as in the previous Lemma.
 
By Lemmas \ref{A2interior} and \ref{nablaA2interior} we have that for $L$ as in Lemma \ref{A2interior} then for all time on $M_t\cap L$
\[
\begin{array}{rcccl}
 \max \left\{\frac{{\widehat{C}{}^H_1}}{n}, 0 \right\}^2 &\leq& |\widehat{A}|^2 &\leq& \widehat{C}_A\\
0 &\leq& |\nabla^m \widehat{A} |^2 &\leq& \widehat{C}_{A,m}
\end{array}
\]
where the constant depends on the boundary of $L$ and how far $L$ is from $\Sigma$.

Now using Arzel\'a--Ascoli repeatedly, a standard diagonal argument completes the theorem.
\end{proof}

\bibliographystyle{plain}
\bibliography{$HOME/Documents/bibblee/bib}
\end{document}